\newtheorem{theorem}{Theorem}
\newtheorem*{remark*}{Remark}
\newtheorem{lemma}{Lemma}
\newtheorem{example}{Example}
\renewcommand{\figurename}{Figure }
\newcommand{\bfi}{\bfseries\itshape}
\title[Lagrangian and Hamiltonian Taylor Variational Integrators]{Lagrangian and Hamiltonian\\ Taylor Variational Integrators}
\author{Jeremy Schmitt, Tatiana Shingel, and Melvin Leok}
\begin{document}
\begin{abstract}

In this paper, we present a variational integrator that is based on an approximation of the Euler--Lagrange boundary-value problem via Taylor's method. This can viewed as a special case of the shooting-based variational integrator introduced in \cite{LeSh2011_sbvi}. The Taylor variational integrator exploits the structure of the Taylor method, which results in a shooting method that is one order higher compared to other shooting methods based on a one-step method of the same order. In addition, this method can generate quadrature nodal evaluations at the cost of a polynomial evaluation, which may increase its efficiency relative to other shooting-based variational integrators. A symmetric version of the method is proposed, and numerical experiments are conducted to exhibit the efficacy and efficiency of the method.
\end{abstract}
\maketitle

\section{Introduction}
This paper is concerned with the systematic construction and analysis of Lagrangian and Hamiltonian variational integrators of arbitrarily high-order derived from an underlying Taylor integrator. This can be viewed, on the Lagrangian side, as a special case of the shooting-based variational integrators introduced in \cite{LeSh2011_sbvi}, which provided a general framework for constructing a Lagrangian variational integrator from a given one-step method.

The main limitation of the shooting-based variational integrator approach is that in order to achieve higher-order accuracy, one requires multiple steps of the underlying one-step method in order to obtain approximations of the solution of the Euler--Lagrange boundary-value problem at the quadrature points. This is of course the best one can hope to achieve given a generic one-step method, but for one-step methods such as collocation methods or Taylor methods, one obtains a continuous approximation that can be evaluated at multiple points. As such, these methods only require a single step of the one-step method in order to obtain a continuous approximation of the Euler--Lagrange boundary-value problem that can be used to construct discrete Lagrangians and discrete Hamiltonians that generate symplectic integrators.

We focus on the use of Taylor integrators as the underlying one-step method, since they can be efficiently implemented to arbitrarily high-order for a broad range of problems by leveraging automatic differentiation techniques, and the resulting solution can be evaluated at additional quadrature points at the cost of a polynomial evaluation.

\section{Discrete Mechanics}\index{discrete mechanics}
Discrete Lagrangian mechanics \cite{MaWe2001} is based on a discrete analogue of Hamilton's principle, referred to as  the {\bfi discrete Hamilton's principle},
\[ \delta \mathbb{S}_d = 0,\]
where the {\bfi discrete action sum}, $\mathbb{S}_d:Q^{n+1}\rightarrow \mathbb{R}$, is given by
\[ \mathbb{S}_d(q_0,q_1,\ldots,q_n) = \sum\nolimits_{i=0}^{n-1} L_d(q_i, q_{i+1}) .\]
The {\bfi discrete Lagrangian}, $L_d:Q\times Q\rightarrow \mathbb{R}$, is a generating function of the symplectic flow, and is an approximation to the {\bfi exact discrete Lagrangian},
\begin{equation}
L_d^E(q_0,q_1;h)=\int_0^h L(q_{01}(t),\dot q_{01}(t)) dt,\label{exact_Ld_Jacobi}
\end{equation}
where $q_{01}(0)=q_0,$ $q_{01}(h)=q_1,$ and $q_{01}$ satisfies the
Euler--Lagrange equation in the time interval $(0,h)$.

The discrete variational principle yields the {\bfi discrete Euler--Lagrange (DEL)} equation,
\begin{equation}\label{DEL}
 D_2 L_d(q_{k-1},q_k)+D_1 L_d(q_k,q_{k+1})=0,
\end{equation}
which implicitly defines the {\bfi discrete Lagrangian map} $F_{L_d}:(q_{k-1},q_k)\mapsto(q_k,q_{k+1})$ for initial conditions $(q_0,q_1)$ that are sufficiently close to the diagonal of $Q\times Q$. This is equivalent to the {\bfi implicit discrete Euler--Lagrange (IDEL)} equations,
\begin{equation}
p_k=-D_1 L_d(q_k, q_{k+1}),\qquad p_{k+1}=D_2 L_d(q_k, q_{k+1}),\label{IDEL}
\end{equation}
which implicitly defines the {\bfi discrete Hamiltonian map} $\tilde{F}_{L_d}:(q_k,p_k)\mapsto(q_{k+1},p_{k+1})$, where the discrete Lagrangian is the Type I generating function of the symplectic transformation. Furthermore, the discrete Hamiltonian map associated with the exact discrete Lagrangian $\tilde{F}_{L_d^E}$ is the time-$h$ flow map of the Hamiltonian vector field. These observations serve as the basis by which the variational error analysis result of \S\ref{vea} is proven in \cite{MaWe2001}. In particular, variational error analysis relates the order to which a computable discrete Lagrangian approximates the exact discrete Lagrangian with the order of accuracy of the discrete Hamiltonian map when viewed as a one-step method for approximating the flow of Hamilton's equations.

\subsection{Variational error analysis\label{vea}}
The natural setting for analyzing the order of accuracy of a variational integrator is the variational error analysis framework introduced in \cite{MaWe2001}. In particular, Theorem 2.3.1 of \cite{MaWe2001} states that if a discrete Lagrangian, $L_d:Q\times Q\rightarrow\mathbb{R}$, approximates the exact discrete Lagrangian, $L_d^E:Q\times Q\rightarrow\mathbb{R}$, given in \eqref{exact_Ld_Jacobi} to order $p$, i.e.,
\begin{align} L_d(q_0, q_1;h)=L_d^E(q_0,q_1;h)+\mathcal{O}(h^{p+1}),\label{order_Ld}
\end{align}
then the discrete Hamiltonian map, $\tilde{F}_{L_d}:(q_k,p_k)\mapsto(q_{k+1},p_{k+1})$, viewed as a one-step method, is order $p$ accurate.

\section{Lagrangian Taylor Variational Integrator}
The exact discrete Lagrangian~\eqref{exact_Ld_Jacobi} is given by the action integral evaluated along the solution of the Euler--Lagrange boundary-value problem. In turn, the boundary-value problem with boundary data $(q_0,q_1)$ can be related to an initial-value problem with initial data $(q_0,v_0)$, which satisfies the condition $q_1=\pi_Q \Phi_h(q_0,v_0)$, where $\pi_Q:TQ\rightarrow Q$ is the canonical projection onto $Q$ and $\Phi_h:TQ\rightarrow TQ$ is the exact time-$h$ flow map. This yields the following characterization of the exact discrete Lagrangian,
\[L_d^E(q_0,q_1;h)=\int_0^h L(\Phi_t(q_0,v_0)) dt,\]
where $q_1=\pi_Q \Phi_h(q_0,v_0)$. The Taylor variational integrator is generated by a computable discrete Lagrangian obtained when the integral is approximated by a quadrature rule, and the Taylor method is used to approximate the flow map that relates the boundary data $(q_0,q_1)$ with the initial-value data $(q_0,v_0)$, and the trajectory associated with the initial data. The following summarizes the construction of the Taylor variational integrator.
\begin{enumerate}[label=(\roman*)]
\item The approximation to $\dot{q}(0)=v_0$, denoted as $\tilde{v}_0$, is defined via the inverse problem, \begin{align} q_1=\pi_Q \circ \Psi^{(r+1)}_h(q_0,\tilde{v}_0), \label{tildev0a} \end{align} where $\pi_Q:TQ\rightarrow Q$ is the canonical projection onto $Q$ and $\Psi^{(r+1)}_h:TQ\rightarrow TQ$ denotes a $(r+1)$-order Taylor method.
\item Generate approximations to the quadrature nodal values, $q_{c_i}\approx q(c_ih)$ (excluding $q_1$ if needed, which is assumed to be given) and $v_{c_i}\approx \dot{q}(c_ih)$, via Taylor's method using $\tilde{v}_0$, \begin{align} (q_{c_i},v_{c_i}) &= \Psi^{(r)}_{c_ih}(q_0,\tilde{v}_0). \label{qcvc} \end{align}
\item Apply the quadrature rule to construct the associated discrete Lagrangian, \begin{align}L_d(q_0,q_1;h) = h \sum_{i=1}^m b_i L(q_{c_i},v_{c_i}). \label{Ld_Taylor} \end{align}
\item Applying the discrete Legendre transforms implicitly defines the method,
	\begin{align*}
		p_0 &= -D_1L_d(q_0,q_1;h), \\ 
		p_1 &= D_2L_d(q_0,q_1;h).
	\end{align*}
\end{enumerate}

\begin{remark*}
It may seem like a waste to solve for $\tilde{v}_0$ using a $(r+1)$-order Taylor method, and then to use only a $r$-order method to solve the Euler--Lagrange boundary-value problem, but from an implementation perspective, no additional derivative evaluations are needed to solve \eqref{tildev0a}, other than those already required in implementing the $r$-order Taylor method on $TQ$. In fact, it is an efficient use of the higher-derivative information we already needed to compute in order to construct the $r$-order Taylor method on $TQ$.

This apparent discrepancy can be resolved by thinking of equation \eqref{tildev0a} as being a $(r+1)$-order Taylor method for the second-order differential equation on $Q$, and \eqref{qcvc} as a $r$-order Taylor method on the first-order differential equation on $TQ$. In particular, notice that because of the canonical projection $\pi_Q$ in equation \eqref{tildev0a}, we only need to compute up to $q^{(r+1)}(0)$ in order to solve for $\tilde{v}_0$, instead of the up to  $q^{(r+2)}(0)$ that is necessary to define $\Psi^{(r+1)}_h$. But, we needed to compute up to $v^{(r)}(0)=q^{(r+1)}(0)$ in order to construct $\Psi^{(r)}_h$, the $r$-order Taylor method on $TQ$.
\end{remark*}

The following lemmas are needed for a theorem on the accuracy of the method. These lemmas can be proved using Lipschitz continuity and triangle inequalities (see Appendices for their proofs).
\begin{lemma}\label{lemma:velocity_approximation}
$\tilde{v}_0$ as defined by, \eqref{tildev0a}, approximates $v_0$ to at least $\mathcal{O}(h^{r+1})$.
\end{lemma}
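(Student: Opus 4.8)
The plan is to reduce the velocity comparison to a comparison of the endpoint-position maps, for which the order of the Taylor method gives a direct estimate, and then to recover the velocity error by inverting the dependence of the endpoint position on the initial velocity, which is nearly linear in $h$. First I would introduce the position maps $v \mapsto \pi_Q \Phi_h(q_0, v)$ for the exact flow and $v \mapsto \pi_Q \Psi^{(r+1)}_h(q_0, v)$ for the Taylor method. Expanding in $h$, both take the form $q_0 + h v + R(v,h)$ and $q_0 + h v + \tilde R(v,h)$ respectively, where the remainders satisfy $R, \tilde R = \mathcal{O}(h^2)$ and are Lipschitz in $v$ on a fixed neighborhood of $v_0$ with Lipschitz constant $\mathcal{O}(h^2)$. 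The defining relations $q_1 = \pi_Q \Phi_h(q_0, v_0)$ and $q_1 = \pi_Q \Psi^{(r+1)}_h(q_0, \tilde v_0)$ then combine into the single identity $\pi_Q \Phi_h(q_0, v_0) = \pi_Q \Psi^{(r+1)}_h(q_0, \tilde v_0)$, which is the starting point.

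Next I would invoke the order of the method: since $\Psi^{(r+1)}_h$ is an $(r+1)$-order Taylor method, its one-step error in the position component satisfies $\pi_Q \Phi_h(q_0, v) - \pi_Q \Psi^{(r+1)}_h(q_0, v) = \mathcal{O}(h^{r+2})$ uniformly for $v$ near $v_0$. Using the two expansions above, the identity rearranges to $h(v_0 - \tilde v_0) = \tilde R(\tilde v_0, h) - R(v_0, h)$. I would then split the right-hand side as $[\tilde R(\tilde v_0,h) - R(\tilde v_0,h)] + [R(\tilde v_0,h) - R(v_0,h)]$, where a triangle inequality bounds the first bracket by the truncation error $\mathcal{O}(h^{r+2})$, and Lipschitz continuity of $R(\cdot, h)$ bounds the second by $C h^2 |v_0 - \tilde v_0|$.

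Finally, combining these bounds yields $(h - C h^2)\,|v_0 - \tilde v_0| \le C_1 h^{r+2}$; for $h$ sufficiently small the prefactor is bounded below by $h/2$, and dividing through gives $|v_0 - \tilde v_0| = \mathcal{O}(h^{r+1})$, as claimed. I expect the main obstacle to be the degradation of one order of $h$: because the endpoint position depends on the initial velocity through $h v + \mathcal{O}(h^2)$, recovering the velocity requires dividing by $h$, so the $\mathcal{O}(h^{r+2})$ position error becomes only an $\mathcal{O}(h^{r+1})$ velocity error — this is precisely why a method of one order higher is used in \eqref{tildev0a}. The remaining technical point is a standard bootstrapping step: one must first confirm that $\tilde v_0$ lies in the fixed neighborhood of $v_0$ on which the Lipschitz and $\mathcal{O}(h^2)$ remainder estimates hold uniformly, which follows because $\tilde v_0$ solves $h v + \mathcal{O}(h^2) = q_1 - q_0$, giving $\tilde v_0 = (q_1 - q_0)/h + \mathcal{O}(h) \to v_0$ as $h \to 0$.
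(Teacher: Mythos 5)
Your proposal is correct and follows essentially the same route as the paper's proof in the appendix: both rearrange the exact and numerical endpoint relations into the form $q_0+hv+(\text{remainder})=q_1$, identify the difference $\tilde R(\tilde v_0,h)-R(v_0,h)$ as a truncation-error piece of size $\mathcal{O}(h^{r+2})$ plus a piece controlled by Lipschitz continuity in the velocity with constant $\mathcal{O}(h^2)$, absorb the latter for small $h$, and divide by $h$ to lose exactly one order. Your abstract remainder formulation (rather than the paper's explicit prolonged derivatives $f^{(k-1)}$) anticipates the paper's own remark that the argument extends to any $(r+1)$-order one-step method, and your explicit bootstrapping check that $\tilde v_0$ stays in the neighborhood where the uniform estimates hold is a point the paper's proof treats only implicitly.
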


\begin{lemma}\label{lemma:bvp_approximation}
A $r$-order Taylor method with initial conditions $(q_0,\tilde{v}_0)$, where $\tilde{v}_0$ is defined by \eqref{tildev0a}, is accurate to at least $\mathcal{O}(h^{r+1})$ for the Euler--Lagrange boundary-value problem with boundary conditions $(q_0,q_1)$.
\end{lemma}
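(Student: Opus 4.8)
The plan is to measure the error between the computed quadrature-node trajectory $\Psi^{(r)}_{c_ih}(q_0,\tilde v_0)$ from~\eqref{qcvc} and the exact solution of the Euler--Lagrange boundary-value problem with boundary data $(q_0,q_1)$. The starting observation is that this exact BVP solution is realized as the initial-value trajectory $\Phi_t(q_0,v_0)$, where $v_0=\dot q_{01}(0)$ is the true initial velocity satisfying $q_1=\pi_Q\Phi_h(q_0,v_0)$; that is, $\Phi_{c_ih}(q_0,v_0)=(q_{01}(c_ih),\dot q_{01}(c_ih))$ is exactly the quantity we wish to approximate. Hence the lemma reduces to bounding
\[
\bigl\|\Psi^{(r)}_{c_ih}(q_0,\tilde v_0)-\Phi_{c_ih}(q_0,v_0)\bigr\|
\]
by $\mathcal{O}(h^{r+1})$ at each node $c_i$.

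First I would insert the intermediate quantity $\Psi^{(r)}_{c_ih}(q_0,v_0)$, the $r$-order Taylor method run from the \emph{exact} initial velocity, and split the error with the triangle inequality into
\[
\bigl\|\Psi^{(r)}_{c_ih}(q_0,\tilde v_0)-\Psi^{(r)}_{c_ih}(q_0,v_0)\bigr\|+\bigl\|\Psi^{(r)}_{c_ih}(q_0,v_0)-\Phi_{c_ih}(q_0,v_0)\bigr\|.
\]
The second term is precisely the local truncation error of the $r$-order Taylor method over a single step of length $c_ih\le h$, which is $\mathcal{O}(h^{r+1})$ by construction of the method. For the first term, I would invoke the fact that the Taylor map $\Psi^{(r)}_{c_ih}$ is a polynomial in its initial data, with coefficients assembled from derivatives of the smooth underlying vector field, and is therefore Lipschitz in the initial conditions with constant $C$ bounded for $c_ih\in[0,h]$. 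Combined with Lemma~\ref{lemma:velocity_approximation}, which provides $\|\tilde v_0-v_0\|=\mathcal{O}(h^{r+1})$, this gives the bound $C\|\tilde v_0-v_0\|=\mathcal{O}(h^{r+1})$ for the first term. Adding the two contributions yields the stated estimate.

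The main point requiring care is ensuring that the Lipschitz constant $C$ for the Taylor map is uniform in $h$ and carries no negative power of $h$ that would cancel the gain from Lemma~\ref{lemma:velocity_approximation}. Because the estimate is over a single step rather than an accumulation over many steps, this follows from the smoothness of the vector field on a compact neighborhood of the trajectory together with a Gronwall-type comparison of the Taylor polynomial against the identity-plus-perturbation structure it inherits from the flow; the Lipschitz constant is then bounded uniformly as $h\to0$. Once this dependence is controlled, the remaining steps are routine applications of the triangle inequality and Lipschitz continuity, consistent with the proof strategy indicated for these lemmas.
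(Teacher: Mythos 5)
Your argument is correct, and it reaches the same $\mathcal{O}(h^{r+1})$ bound via the same three ingredients (Lemma~\ref{lemma:velocity_approximation}, a triangle inequality, and the one-step local truncation error), but with the dual decomposition to the one the paper uses. The paper inserts as its intermediate trajectory the \emph{exact} flow started from the perturbed velocity, $(\tilde{q}(t),\tilde{v}(t))=\Phi_t(q_0,\tilde{v}_0)$; the first piece is then controlled by well-posedness of the Euler--Lagrange initial-value problem (Lipschitz dependence of the exact solution on the initial velocity), and the second piece is the local truncation error of $\Psi^{(r)}$ applied to the data $(q_0,\tilde{v}_0)$. You instead insert the \emph{numerical} map started from the exact velocity, $\Psi^{(r)}_{c_ih}(q_0,v_0)$, so your stability estimate must be proved for the discrete map rather than for the flow. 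That is a perfectly legitimate trade: the paper's route gets its stability constant for free from standard ODE theory, while yours requires the observation that $\Psi^{(r)}_{c_ih}$ is a polynomial in $c_ih$ whose coefficients are smooth functions of the initial data, hence Lipschitz in $v$ with an $h$-uniform constant (indeed of size $1+\mathcal{O}(h)$) carrying no negative powers of $h$ --- a point you correctly flag as the one needing care. Your appeal to a ``Gronwall-type comparison'' there is more machinery than needed; over a single step, smoothness of the prolonged Euler--Lagrange vector field on a compact neighborhood already gives the uniform Lipschitz constant. Restricting attention to the nodes $t=c_ih$ is also slightly narrower than the paper's statement for all $t\in[0,h]$, though it is all that the subsequent quadrature argument requires.
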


\begin{theorem}
Assuming a Lagrangian $L$ that is Lipschitz continuous in both variables, then for a $r$-order accurate Taylor method, $\Psi^{(r)}_h$, and a $s$-order accurate quadrature formula, the associated Taylor discrete Lagrangian \eqref{Ld_Taylor} has order of accuracy at least $\min(r+1,s)$.
\end{theorem}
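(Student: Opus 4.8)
The plan is to compare $L_d$ with $L_d^E$ by inserting the intermediate quantity obtained by applying the \emph{same} quadrature rule to the \emph{exact} Euler--Lagrange solution $q_{01}$. Setting $f(t) := L(q_{01}(t),\dot q_{01}(t))$, so that $L_d^E = \int_0^h f(t)\,dt$, I would bound the total error by the triangle inequality,
\[ \bigl| L_d - L_d^E \bigr| \le \underbrace{\Bigl| \int_0^h f(t)\,dt - h\sum_{i=1}^m b_i\, f(c_i h) \Bigr|}_{\text{quadrature error}} + \underbrace{h\sum_{i=1}^m |b_i|\, \bigl| f(c_i h) - L(q_{c_i},v_{c_i}) \bigr|}_{\text{trajectory error}}, \]
and estimate the two contributions independently.

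For the quadrature error, the hypothesis that the formula is $s$-order accurate means precisely that $\int_0^h g\,dt = h\sum_i b_i\, g(c_i h) + \mathcal{O}(h^{s+1})$ for any sufficiently smooth integrand $g$; since $L$ is smooth and $q_{01}$ solves the Euler--Lagrange equation, $f$ is smooth, so this term is $\mathcal{O}(h^{s+1})$. For the trajectory error, I would use the Lipschitz continuity of $L$ to bound each summand by $K\bigl(|q_{01}(c_i h)-q_{c_i}| + |\dot q_{01}(c_i h)-v_{c_i}|\bigr)$, and then invoke Lemma \ref{lemma:bvp_approximation}, which guarantees that the $r$-order Taylor trajectory launched from $(q_0,\tilde v_0)$ matches the exact Euler--Lagrange solution to $\mathcal{O}(h^{r+1})$ at every node. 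As the weights $b_i$ are $\mathcal{O}(1)$ and an explicit factor of $h$ stands in front of the sum, the trajectory error is $h\cdot\mathcal{O}(h^{r+1}) = \mathcal{O}(h^{r+2})$.

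Adding the two pieces yields $|L_d-L_d^E| = \mathcal{O}(h^{s+1}) + \mathcal{O}(h^{r+2}) = \mathcal{O}\bigl(h^{\min(s+1,\,r+2)}\bigr)$. Since $\min(s+1,r+2) = \min(s,r+1)+1$, this is exactly the statement that $L_d$ approximates $L_d^E$ to order $\min(r+1,s)$ in the sense of \eqref{order_Ld}, which by the variational error analysis of \S\ref{vea} is what the theorem asserts.

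I expect the one genuine subtlety to be the uniformity of the implied constants in $h$. The Lipschitz estimate needs a single constant $K$ valid on a fixed compact set, and the quadrature estimate needs a bound on a derivative of $f$ that is uniform over the relevant range of $h$; both are legitimate because, for small $h$, the exact solution $q_{01}$ and its Taylor approximation both remain in a bounded neighborhood of $(q_0,v_0)$, so one fixes the neighborhood first and only then lets $h\to 0$. Once this uniformity is in hand, the remainder is the routine bookkeeping sketched above.
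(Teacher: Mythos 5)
Your proposal is correct and follows essentially the same route as the paper's proof: both split the error into a quadrature term of size $\mathcal{O}(h^{s+1})$ applied to the exact Euler--Lagrange trajectory and a trajectory term controlled by Lemma~\ref{lemma:bvp_approximation} together with the Lipschitz continuity of $L$, yielding $h\cdot\mathcal{O}(h^{r+1})=\mathcal{O}(h^{r+2})$ and hence $\mathcal{O}(h^{\min(r+1,s)+1})$ in total. Your explicit attention to the uniformity of the Lipschitz and quadrature constants is a point the paper leaves implicit, but it does not change the argument.
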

\begin{proof}
$(q_d(t), v_d(t))$, associated with the Taylor method $\Psi_h$ of order $r$ and initial data $(q_0,\tilde{v}_0)$, approximates the exact solution  $(q_{01}(t),v_{01}(t))$ of the Euler--Lagrange boundary-value problem with the following error,
\begin{align*}
q_{01}(c_i h)&=q_d(c_i h)+\mathcal{O}(h^{r+1}),\\
v_{01}(c_i h)&=v_d(c_i h)+\mathcal{O}(h^{r+1}).
\end{align*}
If the numerical quadrature formula is order $s$ accurate, then 
\begin{align*}
L_d^E(q_0, q_1;h)&=\int_0^h L(q_{01}(t),v_{01}(t))dt\\
&=\left[h\sum\nolimits_{i=1}^m b_i L(q_{01}(c_i h), v_{01}(c_i h))\right]+\mathcal{O}(h^{s+1})\\
&=\left[h\sum\nolimits_{i=1}^m b_i L(q_d(c_i h)+\mathcal{O}(h^{r+1}), v_d(c_i h)+\mathcal{O}(h^{r+1}))\right]+\mathcal{O}(h^{s+1})\\
&=\left[h\sum\nolimits_{i=1}^m b_i L(q_d(c_i h), v_d(c_i h))\right]+\mathcal{O}(h^{r+2})+\mathcal{O}(h^{s+1})\\
&=L_d(q_0, q_1;h)+\mathcal{O}(h^{r+2})+\mathcal{O}(h^{s+1})\\
&=L_d(q_0, q_1;h)+\mathcal{O}(h^{\min(r+1,s)+1}),
\end{align*}
where we used the quadrature approximation error, the error estimates on the shooting solution, and the assumption that $L$ is Lipschitz continuous in both variables.
\end{proof}
The choice of the Taylor method as the underlying one-step method has the advantage that it only requires one to precompute the prolongation of the Euler--Lagrange vector field once at the initial time, and the computational cost is not increased appreciably by having to compute the numerical solution at multiple quadrature nodes, since that only requires a polynomial evaluation. This efficiency in evaluation improves upon the methods outlined in \cite{LeSh2011} and \cite{LeSh2011_sbvi}, which utilized collocation and the shooting-method, respectively.

\begin{example}
Consider a first-order Taylor variational integrator that uses the rectangular quadrature rule about the initial point. We assume a Lagrangian of the form $L(q,\dot{q})=\frac{1}{2}\dot{q}^TM\dot{q}-V(q)$. Then the integrator is constructed as follows:
\begin{enumerate}[label=(\roman*)]
\item The inverse problem is, $$q_1 = q_0 + h\tilde{v}_0.$$ This implies $\tilde{v}_0 = \frac{q_1-q_0}{h}$, where $q_0$, $q_1$ are the given boundary conditions.
\item The quadrature nodal values are $q_{c_1} = q_0$ and $v_{c_1} = \tilde{v}_0 = \frac{q_1-q_0}{h}$. 
\item The corresponding discrete Lagrangian is given by,
	\begin{align*}
		L_d(q_0,q_1;h) &= hL\Big(q_0,\frac{q_1-q_0}{h}\Big) \\ 
		&= h\left[\frac{1}{2}\Big(\frac{q_1-q_0}{h}\Big)^TM\Big(\frac{q_1-q_0}{h}\Big) - V(q_0)\right] .
	\end{align*}
\item The discrete Legendre transforms are given by,
	\begin{align*}
		p_0 &= M\Big(\frac{q_1-q_0}{h}\Big) +h \nabla V(q_0), \\ 
		p_1 &= M\Big(\frac{q_1-q_0}{h}\Big).
	\end{align*}
\end{enumerate}
With some rearranging and substitution we see that this is symplectic Euler-A,
\begin{align*}
	q_1 &= q_0 +hM^{-1}p_1, \\
	p_1 &= p_0 -h\nabla V(q_0).
\end{align*}
\end{example}

If we use the rectangular quadrature rule about the end point, then the resulting method would be symplectic Euler-B. If instead we choose the trapezoid quadrature rule, then the resulting method will be St\"ormer--Verlet. All three of these classic symplectic integrators can be derived as Taylor variational integrators. However, there are also novel methods that come from the Taylor variational integrator framework, as the next example illustrates.

\begin{example}
Consider a second-order Taylor variational integrator, which utilizes a first-order Taylor method combined with the trapezoid rule to approximate the discrete Lagrangian. The approximate initial velocity is given by,
\begin{align*}
\tilde{v}_0 = \frac{q_1-q_0}{h} -\frac{h}{2}M^{-1}\nabla V(q_0).
\end{align*}
The resulting method is an explicit second-order method given by,
\begin{align*}
q_1 &= q_0 + h M^{-1} p_0 - \frac{h^2}{2} M^{-1} \nabla V(q_0) + \frac{h^4}{4} M^{-1} \nabla \nabla V(q_0) M^{-1} V(q_0), \\
p_1 &= M \tilde{v}_0 -\frac{h}{2}(\nabla V(q_0) + \nabla V(q_1)).
\end{align*}
\end{example}

As demonstrated above, Lagrangian Taylor variational integrators provide a very general family of symplectic integrators that include not only classic symplectic integrators, but also novel symplectic integrators. The Taylor variational integrator is amenable to the construction of higher-order symplectic integrators that can benefit from many of the numerical techniques that have enhanced the classical Taylor method (see \cite{JoZo2005}, \cite{Simo2007}). In particular, automatic differentiation allows for accurate and relatively cheap derivative evaluations (see \cite{Haro2011}, \cite{Ral1981}, \cite{Neid2010}). In general, higher-order Taylor variational integrators will require solving a system of nonlinear equations, which can be dealt with using standard methods (see \cite{Iserles2009}). While it is clear that Taylor variational integrators will have a higher computational cost than the Taylor method, in many cases the Taylor variational integrator can preserve accuracy and structure for larger step sizes, which may justify the higher cost per step. We will further examine these topics in section \ref{numerical_experiments}. Next, we consider discrete Hamiltonian formulations and symmetric formulations of the Taylor variational integrator.

\section{Hamiltonian and Symmetric Taylor Variational Integrators}
\subsection{Hamiltonian Taylor Variational Integrators}\label{discHam}
Thus far, we have derived the Taylor variational integrator by approximating the discrete Lagrangian, which is a type I generating function of the symplectic map/integrator. However, we will also consider the discrete right and discrete left Hamiltonians (see \cite{LaWe2006}, \cite{LeZh2011}), which are type II and type III generating functions, respectively. The motivation being that for a degenerate Hamiltonian there may be no corresponding Lagrangian formulation, in which case the discrete Hamiltonian formulation may be the only way to construct a variational integrator. Also, it has recently been shown in \cite{ScLe2017} that even when the Legendre transform is a diffeomorphism, the discrete Lagrangian and discrete Hamiltonian formulation generated by a fixed approximation scheme can lead to different variational integrators.

 The boundary-value formulation of the exact discrete right Hamiltonian is given by,
\begin{align*}
H_d^{+,E}(q_0,p_1;h)& =  \left(p_1^T q_1 - \int_0^T \left[ p^T \dot{q}-H(q, p) \right]
dt\right),
\end{align*}
where $(q(t),p(t))$ satisfy Hamilton's equations with boundary conditions $q(0)=q_0$, $p(T)=p_1$.
Now let us consider the construction of a Taylor discrete right Hamiltonian.
\begin{enumerate}[label=(\roman*)]
\item Construct a $r$-order Taylor expansion on the cotangent bundle, $T^*Q$, and solve for $\tilde{p}_0$,
$$p_1 = \pi_{T^*Q} \circ \Psi^{(r)}_h(q_0,\tilde{p}_0),$$
where $\pi_{T^*Q}:(q,p)\mapsto p$.
\item Pick a quadrature rule of order $s$ with quadrature weights and nodes given by $(b_i,c_i)$ for $i=1,\ldots,m$.
\item Use a $r$-order Taylor method to generate approximations of $(q(t),p(t))$ at the quadrature nodes,
$$(q_{c_i},p_{c_i})= \Psi^{(r)}_{c_ih}(q_0,\tilde{p}_0),$$
and use a $(r+1)$-order Taylor method on the configuration manifold to generate the approximation to the boundary term $q_1$,
$$\tilde{q}_1=\pi_Q \circ \Psi^{(r+1)}_h(q_0,\tilde{p}_0).$$
\item Use the quadrature rule and approximate boundary term, $\tilde{q}_1$, to construct the discrete right Hamiltonian of order $\min(r+1,s)$,
$$H_d^+(q_0,p_1;h)=p_1^T\tilde{q}_1 - h \sum^m_{i=1} \Big[p_{c_i}^T\dot{q}_{c_i} - H\Big(\Psi^{(r)}_{c_ih}(q_0,\tilde{p}_0)\Big)\Big],$$
where $\dot{q}_{c_i}$ is obtained by inverting the continuous Legendre transform, $(q_{c_i},p_{c_i})=\mathbb{F}L(q_{c_i},\dot{q}_{c_i})$.
\item The method is implicitly defined by the implicit discrete right Hamilton's equations,
	\begin{equation}
		q_1 = D_2H_d^+(q_0, p_1), \qquad p_0 = D_1 H_d^+(q_0, p_1). \label{IRHE} 
	\end{equation}
\end{enumerate}

The boundary-value formulation of the exact discrete left Hamiltonian is given by,
\begin{align*}
H_d^{-,E}(q_1,p_0;h)& = - \left(p_0^T q_0 - \int_0^T \left[ p^T \dot{q}-H(q, p) \right]
dt\right),
\end{align*}
where $(q(t),p(t))$ satisfy Hamilton's equations with boundary conditions $q(T)=q_1$, $p(0)=p_0$.
Now let us consider the construction of a Taylor discrete left Hamiltonian.
\begin{enumerate}[label=(\roman*)]
\item Construct a $(r+1)$-order Taylor expansion on the cotangent bundle, $T^*Q$, and solve for $\tilde{q}_0$,
$$q_1 = \pi_{Q} \circ \Psi^{(r+1)}_h(\tilde{q}_0,p_0).$$
\item Pick a quadrature rule of order $s$ with quadrature weights and nodes given by $(b_i,c_i)$ for $i=1,\ldots,m$.
\item Use a $r$-order Taylor method to generate approximations of $(q(t),p(t))$ at the quadrature nodes,
$$(q_{c_i},p_{c_i})= \Psi^{(r)}_{c_ih}(\tilde{q}_0,p_0).$$
\item Use the quadrature rule and approximate boundary term, $\tilde{q}_0$, to construct the discrete left Hamiltonian of order $\min(r+1,s)$,
$$H_d^-(q_1,p_0;h)=-p_0^T\tilde{q}_0 - h \sum^m_{i=1} \Big[p_{c_i}^T\dot{q}_{c_i} - H\Big(\Psi^{(r)}_{c_ih}(\tilde{q}_0,p_0)\Big)\Big],$$
where $\dot{q}_{c_i}$ is obtained by inverting the continuous Legendre transform, $(q_{c_i},p_{c_i})=\mathbb{F}L(q_{c_i},\dot{q}_{c_i})$.
\item The method is implicitly defined by the implicit discrete left Hamilton's equations,
	\begin{equation}
		p_1 = -D_2H_d^-(q_1, p_0;h), \qquad q_0 = -D_1 H_d^-(q_1, p_0;h). \label{ILHE} 
	\end{equation}
\end{enumerate}

The St\"ormer--Verlet method can be derived as a Lagrangian Taylor variational integrator by choosing $r=0$ for the respective Taylor methods and using the trapezoid rule for the quadrature rule. This yields a discrete Lagrangian corresponding to the St\"ormer--Verlet method,
\begin{equation*}
L_d(q_0,q_1;h) = \frac{h}{2} \left( \Big(\frac{q_1-q_0}{h}\Big)^TM\Big(\frac{q_1-q_0}{h}\Big) - V(q_0) - V(q_1) \right).
\end{equation*}
Choosing $r=0$ and the trapezoid rule to construct a Hamiltonian Taylor variational integrator results in a discrete right Hamiltonian given by,
\begin{equation*}
H_d^+(q_0,p_1;h) = p_1^T (q_0 + hM^{-1}p_1) - \frac{h}{2} \left( p_1^TM^{-1}p_1 - V(q_0 + hM^{-1}p_1) \right),
\end{equation*}
and a discrete left Hamiltonian given by,
\begin{equation*}
H_d^-(q_1,p_0;h) = p_0^T (q_1 - hM^{-1}p_0) - \frac{h}{2} \left( p_0^TM^{-1}p_0 - V(q_1 - hM^{-1}p_0) \right).
\end{equation*}
The corresponding methods are not St\"ormer--Verlet, in fact they are neither symmetric nor explicit. However, a simple calculation shows that these discrete Hamiltonians are adjoint to each other (see \cite{ScLe2017} for info on adjoint discrete Hamiltonians), i.e. $-H_d^+(q_1,p_0;-h) = H_d^-(q_1,p_0;h)$. Therefore, a symmetric method can be constructed by composing the two methods. We will denote the resulting symmetric method by SVHd, and we compare it to St\"ormer--Verlet in section \ref{numerical_experiments} (see \figurename{\ref{SVHd}} and \figurename{\ref{figFPU}}).

It should be noted that some approximations schemes do yield the same method when applied to a discrete Lagrangian and a discrete right/left Hamiltonian. For instance, choosing $r=0$ and the rectangular rule about the end point will yield symplectic Euler-B for both the discrete Lagrangian and discrete right Hamiltonian approximation. When can we expect a fixed approximation scheme applied to a discrete Lagrangian and a discrete right Hamiltonian to yield the same method? The following theorem answers this question.

\begin{theorem} Assuming a regular Lagrangian, we consider a fixed approximation scheme used to construct a discrete Lagrangian, $L_d$, and a discrete right Hamiltonian, $H_d^+$. This results in two integrators, $\tilde{F}_{L_d}:(q_0,p_0) \mapsto (q_{1,L_d},p_{1,L_d})$ and $\tilde{F}_{H_d^+}:(q_0,p_0) \mapsto (q_{1,H_d^+},p_{1,H_d^+})$. If the discrete right Hamiltonian approximation satisfies $p_{1,H_d^+}=D_2L_d(q_0,\hat{q}_1)$, where $\hat{q}_1$ is the approximated value of $q_1$, then the integrators represent the same map, i.e., $(q_{1,L_d},p_{1,L_d}) = (q_{1,H_d^+},p_{1,H_d^+})$.
\end{theorem}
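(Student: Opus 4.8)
The plan is to verify that the output $(q_{1,H_d^+},p_{1,H_d^+})$ of the discrete right Hamiltonian map satisfies the implicit discrete Euler--Lagrange equations \eqref{IDEL} of $L_d$ for the same initial data $(q_0,p_0)$; since a regular Lagrangian renders $L_d$ nondegenerate near the diagonal, those equations have a locally unique solution, so this identification forces the two integrators to agree.

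First I would isolate the structural identity that a fixed approximation scheme enforces between the two generating functions. Because $L_d$ and $H_d^+$ are assembled from the same approximate trajectory and the same quadrature rule, and because $p^T\dot q - H(q,p) = L(q,\dot q)$ along the inverse Legendre transform, the action portion of $H_d^+$ coincides with $L_d$ evaluated at the scheme's approximate endpoint $\hat q_1$, yielding
\begin{equation*}
H_d^+(q_0,p_1) = p_1^T \hat q_1(q_0,p_1) - L_d\big(q_0,\hat q_1(q_0,p_1)\big),
\end{equation*}
where $\hat q_1(q_0,p_1)$ is the approximated value of $q_1$ regarded as a function of the boundary data. Establishing this identity is the step I expect to be the main obstacle, since it is where regularity is essential: one must check that inverting the continuous Legendre transform at the quadrature nodes returns exactly the nodal velocities entering $L_d$, so that $h\sum_i\big[p_{c_i}^T\dot q_{c_i} - H(q_{c_i},p_{c_i})\big] = h\sum_i L(q_{c_i},\dot q_{c_i}) = L_d(q_0,\hat q_1)$.

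Granting the identity, the rest is an envelope-theorem computation. Differentiating in each slot and applying the chain rule gives
\begin{align*}
D_2 H_d^+(q_0,p_1) &= \hat q_1 + (\partial_{p_1}\hat q_1)^{T}\big(p_1 - D_2 L_d(q_0,\hat q_1)\big),\\
D_1 H_d^+(q_0,p_1) &= -D_1 L_d(q_0,\hat q_1) + (\partial_{q_0}\hat q_1)^{T}\big(p_1 - D_2 L_d(q_0,\hat q_1)\big).
\end{align*}
Evaluating at $p_1 = p_{1,H_d^+}$ and invoking the hypothesis $p_{1,H_d^+} = D_2 L_d(q_0,\hat q_1)$, the correction terms carrying the Jacobians $\partial_{p_1}\hat q_1$ and $\partial_{q_0}\hat q_1$ vanish. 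Hence the discrete right Hamilton's equations \eqref{IRHE} collapse to $q_{1,H_d^+} = D_2 H_d^+(q_0,p_{1,H_d^+}) = \hat q_1$ and $p_0 = D_1 H_d^+(q_0,p_{1,H_d^+}) = -D_1 L_d(q_0,\hat q_1)$.

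Finally I would assemble the conclusion. The relations $q_{1,H_d^+} = \hat q_1$, $p_0 = -D_1 L_d(q_0,\hat q_1)$, and the hypothesis $p_{1,H_d^+} = D_2 L_d(q_0,\hat q_1)$ together state precisely that $(\hat q_1, p_{1,H_d^+})$ solves the implicit discrete Euler--Lagrange equations \eqref{IDEL} with input $(q_0,p_0)$, which is the very system that defines $\tilde F_{L_d}$. Regularity makes $D_1 L_d(q_0,\cdot)$ locally invertible, so the equation $p_0 = -D_1 L_d(q_0,q_1)$ has a unique solution $q_1 = q_{1,L_d}$; therefore $q_{1,H_d^+} = q_{1,L_d}$, and then $p_{1,H_d^+} = D_2 L_d(q_0,q_{1,L_d}) = p_{1,L_d}$, establishing $(q_{1,L_d},p_{1,L_d}) = (q_{1,H_d^+},p_{1,H_d^+})$ as claimed.
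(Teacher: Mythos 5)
Your proposal is correct and follows essentially the same route as the paper's proof: write $H_d^+(q_0,p_1)=p_1^T\hat q_1 - L_d(q_0,\hat q_1)$ with $\hat q_1$ an implicit function of $(q_0,p_1)$, differentiate, use the hypothesis $p_{1,H_d^+}=D_2L_d(q_0,\hat q_1)$ to annihilate the Jacobian terms, and conclude via the discrete Legendre transforms. The only (cosmetic) differences are that you compute both partial derivatives before assembling the conclusion and make explicit the appeal to local uniqueness of the implicit discrete Euler--Lagrange solution, whereas the paper deduces $\hat q_1=q_{1,L_d}$ immediately after the $D_1H_d^+$ computation; your extra remarks justifying the identity between the quadrature sums via $p^T\dot q - H = L$ are a welcome elaboration of a step the paper simply asserts.
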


We have placed the proof of the above theorem in the appendix. It is important to note that even though the theorem guarantees the analytical equivalence of the integrators, this does not guarantee numerical equivalence (see \cite{ScLe2017}).

\subsection{Symmetric Lagrangian Taylor Variational Integrators}
Consider the following variational derivation of the St\"ormer--Verlet method. Construct the discrete Lagrangian by using the trapezoid rule and approximating $\tilde{v}_0$ and $\tilde{v}_1$ with the inverse problems given by, 
\begin{equation*}
q_1 = \Psi_h^{(1)}(q_0, \tilde{v}_0), \qquad q_0 = \Psi_{-h}^{(1)}(q_1, \tilde{v}_1),
\end{equation*}
where $\Psi^r_h$ denotes the $r$-th order Taylor method with step size $h$. Then, the velocity approximations are given by,
\begin{equation*}
\tilde{v}_0 = \frac{q_1-q_0}{h}, \qquad \tilde{v}_1 = \frac{q_1-q_0}{h},
\end{equation*}
and the resulting discrete Lagrangian yields the St\"ormer--Verlet method,
\begin{equation*}
L_d(q_0,q_1;h) = \frac{h}{2} \left( \Big(\frac{q_1-q_0}{h}\Big)^T M \Big(\frac{q_1-q_0}{h}\Big) - V(q_0) - V(q_1) \right).
\end{equation*}

It is well-known that St\"ormer--Verlet is a symmetric method, and that symmetric methods preserve important structure of time-reversible equations and are desirable for highly-oscillatory problems (see chapters V and XI of \cite{HaLuWa2006}). We can generalize the above approximation to yield a class of symmetric Taylor variational integrators. The approximation scheme uses a symmetric quadrature rule with weights and nodes $\{b_i,c_i\}_{i=1}^m$, and the Taylor method, and it is outlined as follows:
\begin{enumerate}[label=(\roman*)]
\item Solve the inverse problems for $\tilde{v}_0$ and $\tilde{v}_1$,
	\begin{equation} 
		q_1 = \Psi_h^{(r)}(q_0,\tilde{v}_0), \qquad q_0 = \Psi_{-h}^{(r)}(q_1\tilde{v}_1). \label{tildevs}
	\end{equation}
\item Generate approximations to the quadrature nodes $(q_{c_i},v_{c_i})$ via, \begin{align*} \label{q&v} q_{c_i} &= c_i \pi_Q \circ \Psi^{(r)}_{c_ih}(q_0,\tilde{v}_0) + (1-c_i) \pi_Q \circ \Psi^{(r)}_{-(1-c_i)h}(q_1,\tilde{v}_1) \\ v_{c_i} &= c_i \pi_Q \circ \Psi^{(r-1)}_{c_ih}(q_0,\tilde{v}_0) + (1-c_i) \pi_Q \circ \Psi^{(r-1)}_{-(1-c_i)h}(q_1,\tilde{v}_1).  \end{align*}
Note $q_0$, $q_1$, $\tilde{v}_0$, and $\tilde{v}_1$ are used as the approximations for their respective quadrature nodal values. Also, since the quadrature rule is assumed to be symmetric, $c_i=1-c_{m-i+1}$ and $b_i=b_{m-i+1}$.
\item Construct the discrete Lagrangian, \begin{equation*} L_d(q_0,q_1;h) = h \sum_{i=1}^m b_i L(q_{c_i},v_{c_i}) \end{equation*}
\item Apply the discrete Legendre transforms to implicitly define the variational integrator, \begin{equation*} p_0 = -D_1L_d(q_0,q_1;h), \qquad p_1 = D_2L_d(q_0,q_1;h). \end{equation*}
\end{enumerate}

\begin{theorem}
The symmetric Taylor variational integrator is a symmetric method.
\end{theorem}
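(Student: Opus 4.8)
The plan is to reduce the statement to the standard algebraic characterization of symmetric variational integrators: a discrete Lagrangian generates a self-adjoint one-step map precisely when it satisfies the reversal identity
\[ L_d(q_0,q_1;h) = -L_d(q_1,q_0;-h), \]
which is exactly the relation obeyed by the exact discrete Lagrangian \eqref{exact_Ld_Jacobi} (as one sees by reparametrizing its defining action integral with the negative step size) and, as a direct check, by the St\"ormer--Verlet discrete Lagrangian displayed above (see \cite{MaWe2001} and Chapter V of \cite{HaLuWa2006}). It therefore suffices to show that the symmetric Taylor discrete Lagrangian built in (i)--(iv) satisfies this identity as an exact equality, not merely to the order of the method.

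First I would determine how the construction transforms under the substitution $(q_0,q_1;h)\mapsto(q_1,q_0;-h)$. Applied to the pair of inverse problems \eqref{tildevs}, the first equation $q_1=\Psi^{(r)}_h(q_0,\tilde v_0)$ turns into $q_0=\Psi^{(r)}_{-h}(q_1,\cdot)$, which is precisely the equation defining $\tilde v_1$, while the second turns into the equation defining $\tilde v_0$. Hence the substitution simply interchanges the two shooting velocities, $\tilde v_0\leftrightarrow\tilde v_1$.

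Next I would propagate this interchange through the nodal formulas of step (ii). Denoting by $q_{c_i}'$ and $v_{c_i}'$ the node values produced from the substituted data, the symmetric-quadrature relation $c_i=1-c_{m-i+1}$ makes the two half-flows in each formula swap roles term by term: the forward flow $\Psi^{(r)}_{c_ih}(q_0,\tilde v_0)$ and the backward flow $\Psi^{(r)}_{-(1-c_i)h}(q_1,\tilde v_1)$ reassemble into the two half-flows defining the mirror node, giving $q_{c_i}'=q_{c_{m-i+1}}$ and, by the identical manipulation on the velocity interpolant, $v_{c_i}'=v_{c_{m-i+1}}$. Substituting into the action sum, re-indexing by $j=m-i+1$, and invoking the weight symmetry $b_i=b_{m-i+1}$ collapses the sum back to the original one, while the prefactor $h$ absorbs the single sign from $h\mapsto-h$; this produces $L_d(q_1,q_0;-h)=-L_d(q_0,q_1;h)$, as required.

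I expect the main obstacle to lie in the bookkeeping of that middle step. One must verify that the node matching $q_{c_i}'=q_{c_{m-i+1}}$ and its velocity analogue hold as \emph{exact} identities, since symmetry is an algebraic property of $L_d$ rather than an accuracy statement. The delicate points are confirming that no spurious sign is introduced on $v_{c_i}$ under the time reversal (the negative step in $\Psi^{(r)}_{-(1-c_i)h}$ must be tracked consistently), and checking that the interchange $\tilde v_0\leftrightarrow\tilde v_1$, the node relation $c_i=1-c_{m-i+1}$, and the reversal of each Taylor flow's time argument are mutually compatible across the configuration and velocity formulas simultaneously.
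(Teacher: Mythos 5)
Your proposal is correct and follows essentially the same route as the paper's own proof: reduce symmetry of the integrator to self-adjointness of the discrete Lagrangian, observe that the substitution $(q_0,q_1,h)\mapsto(q_1,q_0,-h)$ swaps $\tilde{v}_0\leftrightarrow\tilde{v}_1$ in the inverse problems, verify the exact node identities $q^*_{c_i}=q_{c_{m-i+1}}$ and $v^*_{c_i}=v_{c_{m-i+1}}$ via $c_i=1-c_{m-i+1}$, and re-index the action sum using $b_i=b_{m-i+1}$ with the overall sign absorbed by $-(-h)=h$. The bookkeeping concerns you flag are exactly the steps the paper carries out explicitly, and they go through as you anticipate.
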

\begin{proof}
By theorem 2.4.1 of \cite{MaWe2001}, it is sufficient and necessary to show that the discrete Lagrangian of the symmetric Taylor variational integrator is self-adjoint, i.e., $L_d(q_0,q_1;h)=-L_d(q_1,q_0;-h)$. We will use (*) to denote the approximated values resulting from exchanging $(q_0,q_1,h)$ for $(q_1,q_0,-h)$. Exchanging $(q_0,q_1,h)$ for $(q_1,q_0,-h)$ transforms \eqref{tildevs} into,
\begin{equation*}
q_0 = \Psi_{-h}^{(1)}(q_1,\tilde{v}^*_0), \qquad q_1 = \Psi_h^{(1)}(q_0,\tilde{v}^*_1),
\end{equation*}
so that $\tilde{v}^*_0=\tilde{v}_1$ and $\tilde{v}^*_1=\tilde{v}_0$. Therefore,
\begin{align*}
q^*_{c_i} &= c_i \pi_Q \circ \Psi^{(r)}_{-c_ih}(q_1,\tilde{v}^*_0) + (1-c_i) \pi_Q \circ \Psi^{(r)}_{(1-c_i)h}(q_0,\tilde{v}^*_1) \\
&= (1-c_{m-i+1}) \pi_Q \circ \Psi^{(r)}_{-(1-c_{m-i+1})h}(q_1,\tilde{v}_1) + c_{m-i+1} \pi_Q \circ \Psi^{(r)}_{c_{m-i+1}h}(q_0,\tilde{v}_0) \\
&= q_{c_{m-i+1}}.
\end{align*}
The second to last line follows from the fact that the quadrature rule is symmetric and therefore satisfies $1-c_i = c_{m-i+1}$. The same steps show that $v^*_{c_i}=v_{c_{m-i+1}}$. The symmetric quadrature rule also implies that $b_i=b_{m-i+1}$, so that we have the following,
\begin{align*}
-L_d(q_1,q_0;-h) &= -(-h) \sum_{i=1}^m b_i L(q^*_{c_i},v^*_{c_i}) \\
&= h \sum_{i=1}^m b_i L(q_{c_i},v_{c_i}) \\
&= L_d(q_0,q_1;h).
\end{align*}
\end{proof}

\begin{theorem}
Given a regular Lagrangian, an odd $r$-order Taylor method, and a symmetric quadrature rule of order $r+1$, then the resulting symmetric Taylor variational integrator is of order $r+1$.
\end{theorem}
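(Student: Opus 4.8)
The plan is to combine the variational error analysis of \S\ref{vea} with the self-adjointness established in the preceding theorem. By \eqref{order_Ld} (Theorem 2.3.1 of \cite{MaWe2001}), the order of the resulting discrete Hamiltonian map equals the largest $p$ for which $L_d(q_0,q_1;h)=L_d^E(q_0,q_1;h)+\mathcal{O}(h^{p+1})$, so I would first estimate this approximation order directly, and then use symmetry to gain one additional order. The role of the hypothesis that $r$ is odd is that the direct estimate will only deliver a \emph{naive} order of $r$, which the symmetry of the method then upgrades to $r+1$.

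For the direct estimate I would track the accuracy of the symmetric nodal quantities in \eqref{tildevs} and the formulas following it. Since the inverse problems in \eqref{tildevs} are solved with an $r$-order (rather than $(r+1)$-order) Taylor method, the argument of Lemma \ref{lemma:velocity_approximation} gives only $\tilde v_0 = v_0 + \mathcal{O}(h^r)$, and similarly for $\tilde v_1$: the $\mathcal{O}(h^{r+1})$ truncation error is divided by the $\mathcal{O}(h)$ sensitivity $\partial_v \bigl(\pi_Q \Psi^{(r)}_h(q_0,v)\bigr)$, costing one power of $h$. Propagating these through the convex combinations, the $\mathcal{O}(h)$ sensitivity of position to the initial velocity yields $q_{c_i} = q_{01}(c_ih) + \mathcal{O}(h^{r+1})$, whereas the velocity nodal value, assembled from the $(r-1)$-order expansion, carries both an $\mathcal{O}(h^r)$ truncation error and an $\mathcal{O}(h^r)$ contribution from $\tilde v_0$, giving $v_{c_i} = \dot q_{01}(c_ih) + \mathcal{O}(h^r)$. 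Substituting into the order-$(r+1)$ symmetric quadrature and invoking Lipschitz continuity of $L$ exactly as in the earlier accuracy theorem for the non-symmetric Taylor discrete Lagrangian, the velocity error dominates and produces $L_d(q_0,q_1;h) = L_d^E(q_0,q_1;h) + \mathcal{O}(h^{r+1})$, i.e.\ a naive order $p=r$.

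To upgrade this to $r+1$ I would invoke symmetry. Regularity of $L$ ensures $\tilde F_{L_d}$ is a well-defined one-step map, and the preceding theorem shows it is symmetric, so I can apply the standard fact that a symmetric method has even order (see \cite{HaLuWa2006}). Writing the local error of the order-$r$ method as $C\,h^{r+1} + \mathcal{O}(h^{r+2})$, self-adjointness equates this leading coefficient with that of the adjoint method, namely $(-1)^{r}C$; as $r$ is odd this forces $C = -C$, hence $C\equiv 0$ and the order is at least $r+1$. Together with the lower bound $p\ge r$ from the variational estimate, this establishes the claimed order $r+1$.

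The main obstacle is the bookkeeping that pins the naive order at exactly $r$: one must verify that the velocity nodal approximation loses precisely one order relative to the position approximation, since it is this single-order shortfall, paired with the parity of $r$, that makes the symmetry argument both necessary and exactly sufficient to recover $r+1$. A secondary point requiring care is confirming that the $h^{r+1}$ term is the genuine leading error --- that no accidental cancellation has already raised the order --- so that the self-adjointness mechanism is indeed what supplies the extra order rather than merely being consistent with it.
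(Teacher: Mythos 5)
Your proposal is correct and follows essentially the same route as the paper: a direct variational error estimate showing $q_{c_i}$ and $v_{c_i}$ are accurate to $\mathcal{O}(h^{r+1})$ and $\mathcal{O}(h^{r})$ respectively, hence $L_d = L_d^E + \mathcal{O}(h^{r+1})$ and naive order $r$, followed by the self-adjointness/even-order argument that kills the leading coefficient $C$ because $r$ is odd. Your added care in noting that the inverse problems \eqref{tildevs} use only an $r$-order Taylor method (so Lemma~\ref{lemma:velocity_approximation} must be rerun one order lower) is a point the paper glosses over by citing the lemmas directly, but it does not change the argument.
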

\begin{proof}
Lemmas~\ref{lemma:velocity_approximation} and~\ref{lemma:bvp_approximation} imply that the nodal value approximations, $q_{c_i}$ and $v_{c_i}$, are of order $r$ and $r-1$ respectively. Therefore,
\begin{align*}
L_d(q_0,q_1;h) &= h \sum_{i=1}^m b_i L(q_{c_i},v_{c_i}) \\
&= h \sum_{i=1}^m b_i L(q(c_ih)+\mathcal{O}(h^{r+1}),\dot{q}(c_ih)+\mathcal{O}(h^r)) \\
&= h \sum_{i=1}^m b_i \left(L(q(c_ih),\dot{q}(c_ih)) + \mathcal{O}(h^r) \right) \\
&= h \sum_{i=1}^m b_i L(q(c_ih),\dot{q}(c_ih)) + \mathcal{O}(h^{r+1}) \\
&= \int^h_0 L(q(t),\dot{q}(t)) dt + \mathcal{O}(h^{r+1}) \\
&= L_d^E(q_0,q_1;h) + \mathcal{O}(h^{r+1}).
\end{align*}
We have used the order of the nodal approximations, the error order of the quadrature rule, and the Lipschitz continuity of a regular Lagrangian. By theorem 2.3.1 of \cite{MaWe2001}, the resulting variational integrator, denoted by $\tilde{\Psi}_h$, is at least of order $r$, i.e.,
\begin{align*}
\tilde{\Psi}_h(q_0,v_0) &= \Phi_h(q_0,v_0) + \mathcal{O}(h^{r+1}) \\
&= \Phi_h(q_0,v_0) + C(q_0,v_0) h^{r+1} + \mathcal{O}(h^{r+2}),
\end{align*}
where $\Phi_h$ is the true flow  of the Euler--Lagrange equations, and the last equality is a consequence of the implicit function theorem.

Finally, since the variational integrator is symmetric and $r+1$ is even, the method will be of order $r+1$ as the following implies.
\begin{align*}
\Phi_h(q_0,v_0) - C(q_0,v_0) h^{r+1} + \mathcal{O}(h^{r+2}) &= \Phi^*_h(q_0,v_0) - C(q_0,v_0) (-h)^{r+1} + \mathcal{O}(h^{r+2}) \\
&= \tilde{\Psi}^*_h(q_0,v_0) \\
&= \tilde{\Psi}_h(q_0,v_0) \\
&=\Phi_h(q_0,v_0) + C(q_0,v_0) h^{r+1} + \mathcal{O}(h^{r+2}),
\end{align*}
which implies $C(q_0,v_0) h^{r+1}=0$, and the method is of order $r+1$ as claimed.
\end{proof}

The symmetric Taylor variational integrator is of order $r+1$, but only requires the derivatives of a $r$-order Taylor method, which makes it more efficient than the non-symmetric  Taylor variational integrator, in addition to the qualitative benefits associated with its symmetry. However, applying this approximation scheme to generate a discrete Hamiltonian will not directly lead to a symmetric method. Recall that the symmetric Taylor variational integrator was inspired by St\"ormer--Verlet, so it is likely that using this approximation scheme to generate a discrete right and left Hamiltonian will result in the discrete left and right Hamiltonian methods that are adjoint to each other. In that case, the composition of these methods should yield a symmetric method from the discrete Hamiltonian formulation. We conjecture that if an approximation scheme yields a symmetric discrete Lagrangian, then the corresponding discrete right and left Hamiltonians will be adjoint. We will explore this further in future work.

\section{Numerical Implementation and Experiments}\label{numerical_experiments}
We now discuss the numerical implementation of the methods introduced in this paper. Below, we present the algorithm for the Lagrangian Taylor variational integrator, and we discuss some of our observations about the implementation details. Additionally, we compare the methods to other kinds of variational integrators, and discuss their relative merits.
\\

\noindent\underline{\textbf{Algorithm}}
Given $(q_0,p_0)$, $h$, $L(q(t),\dot{q}(t))$, the Euler--Lagrange vector field, quadrature weights and nodes $\{(b_i,c_i)\}_{i=1:m}$, and the desired order of the method $r+1$, then the Taylor variational integrator will output $(q_1,p_1)$ and is implemented as follows:
\begin{enumerate}
\item Prolongate the Euler--Lagrange vector field to obtain derivatives $q^{(j)}(q(t),v(t))$ for $j=1,\dots,r+1$.
\item Compute the partial derivatives $\frac{\partial q^{(j)}(q,v)}{\partial q}$ and $\frac{\partial q^{(j)}(q,v)}{\partial v}$.
\item Solve the following nonlinear system for $q_1$ and $\tilde{v}_0$: \begin{equation*} \begin{cases} 0 = q_1 - q_0 - h\tilde{v}_0 - \sum_{j=2}^{p+1} q^{(j)}(q_0,\tilde{v}_0) \frac{h^{j}}{j!}, \\ 0=p_0+\frac{\partial L_d(q_0,q_1)}{\partial q_0}. \end{cases} \end{equation*}
\item Finally, $p_1$ is given explicitly by,$$p_1=\frac{\partial L_d(q_0,q_1)}{\partial q_1}.$$\\
\end{enumerate}

When solving the nonlinear system that arises above, the following points should be noted:
\begin{enumerate}
\item In general, the nonlinear system is not amenable to a fixed-point iteration, so a form of Newton's method is preferable.
\item Each iteration will require evaluation of \begin{align*} q_{c_i} &= q_0 + h\tilde{v}_0 + \sum_{j=2}^r q^{(j)}(q_0,\tilde{v}_0)\frac{h^j}{j!}, \\ v_{c_i} &= \tilde{v}_0 + \sum_{j=2}^{r+1} q^{(j)}(q_0,\tilde{v}_0) \frac{h^{j-1}}{(j-1)!}. \end{align*}
\item The following requires computing $\frac{\partial \tilde{v}_0}{\partial q_0}$, \begin{align*} -p_0 &= \frac{\partial L_d(q_0,q_1)}{\partial q_0} \\ &= h \sum_{i=1}^m b_i\bigg(\frac{\partial L(q_{c_i},v_{c_i})}{\partial q_0} + \frac{\partial L(q_{c_i},v_{c_i})}{\partial \tilde{v}_0} \frac{\partial \tilde{v}_0}{\partial q_0}\bigg)^T \end{align*}
Fortunately, this can be found explicitly and need only be computed once at the beginning of the iteration, \begin{align*} \frac{\partial \tilde{v}_0}{\partial q_0} = \bigg(I + \sum^{r+1}_{j=2} \frac{\partial q^{(j)}(q_0,\tilde{v}_0)}{\partial \tilde{v}_0} \frac{(c_i h)^{j-1}}{j!}\bigg)^{-1} \bigg(\frac{-1}{h}I - \sum_{j=2}^{r+1} \frac{\partial q^{(j)}(q_0,\tilde{v}_0)}{\partial q_0} \frac{(c_i h)^{j-1}}{j!}\bigg) \end{align*}
\item Likewise, when solving $p_1=\frac{\partial L_d(q_0,q_1)}{\partial q_1}$, it will be necessary to compute \begin{align*} \frac{\partial \tilde{v}_0}{\partial q_1} = \frac{1}{h}\bigg(I + \sum_{j=2}^{r+1} \frac{\partial q^{(j)}(q_0,\tilde{v}_0)}{\partial \tilde{v}_0} \frac{(c_i h)^{j-1}}{j!}\bigg)^{-1}, \end{align*} which is explicit and is composed of terms that have already been computed. 
\end{enumerate}
Observe that good initial guesses for the nonlinear system are provided with little computational cost, by using a $(r+1)$-order Taylor method for $q_1$ and the Legendre transform of $p_0$ for $\tilde{v}_0$. Since this yields an approximate solution that is comparable in accuracy to the one obtained by the corresponding Taylor variational integrator, this yields a predictor-corrector implementation, where the Taylor variational integrator applies a symplectic correction that converges very rapidly. In general, when solving a nonlinear system as part of a symplectic method, the method becomes an almost symplectic method (see \cite{Tan2005}) unless it is solved to within machine precision. This implies that the error tolerance of the nonlinear solver will dictate to what order the symplectic structure is preserved and consequently, how well near-energy conservation is preserved (see \figurename{\ref{figalmost}}).

In practice, setting the nonlinear solver tolerance one or two orders above the order of the integrator is sufficient to maintain symplecticity. For most Taylor variational integrators, the nonlinear solver with moderate tolerance converges in a few iterations, and often in one or no iterations. The symmetric Taylor variational integrator showed excellent nonlinear convergence, and only required one iteration of the nonlinear solver for the various experiments we ran. 

\begin{figure}[h]
  \includegraphics[width=0.9\textwidth]{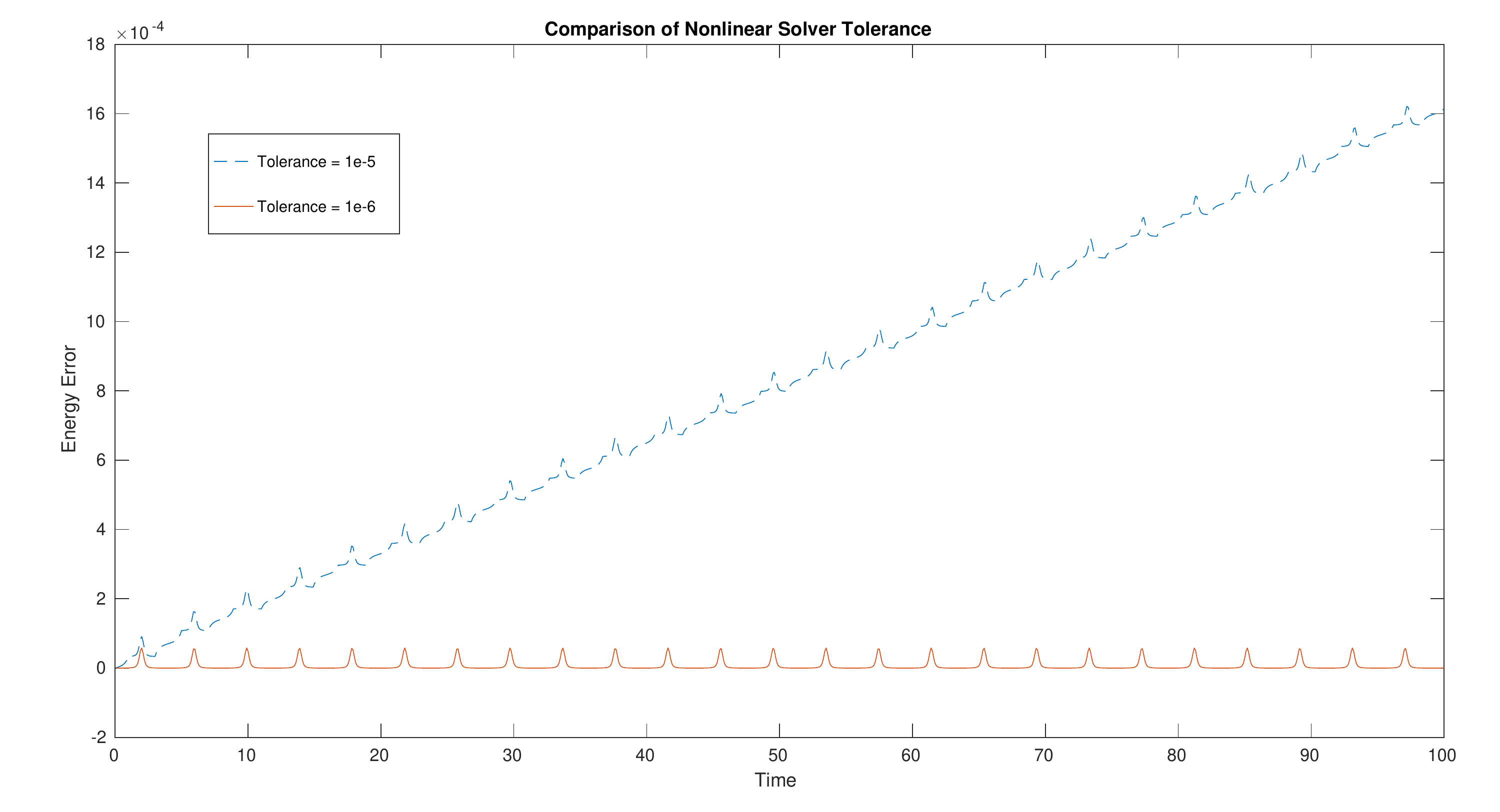}
  \caption
   {The plot of the energy preservation of a 4th order Taylor variational integrator applied to the simple pendulum with two different tolerance levels for the nonlinear solver and a step size of 0.1. Energy drift is evident when the tolerance level is set at $10^{-5}$ or larger, but the drift disappears for smaller tolerance levels. The method had an average energy error around $6.5 \cdot 10^{-5}$ for a tolerance of $10^{-6}$, and an average energy error of $8.1 \cdot 10^{-4}$ for a tolerance of $10^{-5}$.}
   \label{figalmost}
\end{figure}

 \subsection{Automatic Differentiation}
As with the Taylor method, an efficient general purpose implementation will require an efficient means of computing derivatives, such as automatic differentiation. For the following simulations, we used the AdiGator automatic differentiation package for MATLAB (see \cite{PaWeRa2013}). Implementation of a high-order Taylor variational integrator requires both the evaluation of higher time derivatives, $q^{(p+1)}(q_0,\tilde{v}_0)$, and the evaluation of the Jacobians of the time derivatives w.r.t. $q_0$ and $\tilde{v}_0$. The Jacobian evaluations are the most expensive part of the method (see \figurename{\ref{figderiv}}), especially for higher-dimensional systems, and for efficient high-order methods, the cost of Jacobian evaluations will need to be reduced to a level comparable to the time derivative. There appears to be some relationships between the Jacobians and the time derivatives that could potentially be exploited to decrease the evaluation costs. For instance,
\begin{equation*}
\frac{\partial q^{(3)}(q_0,\tilde{v}_0)}{\partial \tilde{v}_0} = \left[ q^{(3)}\bigg(q_0,\begin{bmatrix} 1 \\ 0 \end{bmatrix}\bigg) \ \ q^{(3)}\bigg(q_0,\begin{bmatrix} 0 \\ 1 \end{bmatrix}\bigg) \right],
\end{equation*}
which allows us to replace expensive Jacobian evaluations with cheaper time derivative evaluations. Additionally, Jacobians of higher-order time derivatives appear to have some relations to Jacobians of lower-order time derivatives, such as,
\begin{equation*}
\frac{\partial q^{(4)}(q_0,\tilde{v}_0)}{\partial \tilde{v}_0} = -2\frac{\partial q^{(3)}(q_0,\tilde{v}_0)}{\partial q_0}.
\end{equation*}
Hopefully, a good implementation of automatic differentiation will already take advantage of such relationships.

Automatic differentiation greatly benefits from the way it is compiled, which means the more efficient implementations will be in languages such as Fortran or C++. Another aspect to consider is parallel implementation. Combining automatic differentiation and parallel computing techniques has been shown to significantly reduce computational time (see \cite{BuLaMeBi01}).

One possible implementation for the algorithm would be to construct the Taylor discrete Lagrangian, then apply automatic differentiation to the discrete Lagrangian in combination with a nonlinear solver to recover the discrete Legendre transforms and consequently $(q_1,p_1)$. In fact, this could provide a more general framework for the derivation of all implicit variational integrators.

\begin{figure}[h]
  \includegraphics[width=0.9\textwidth]{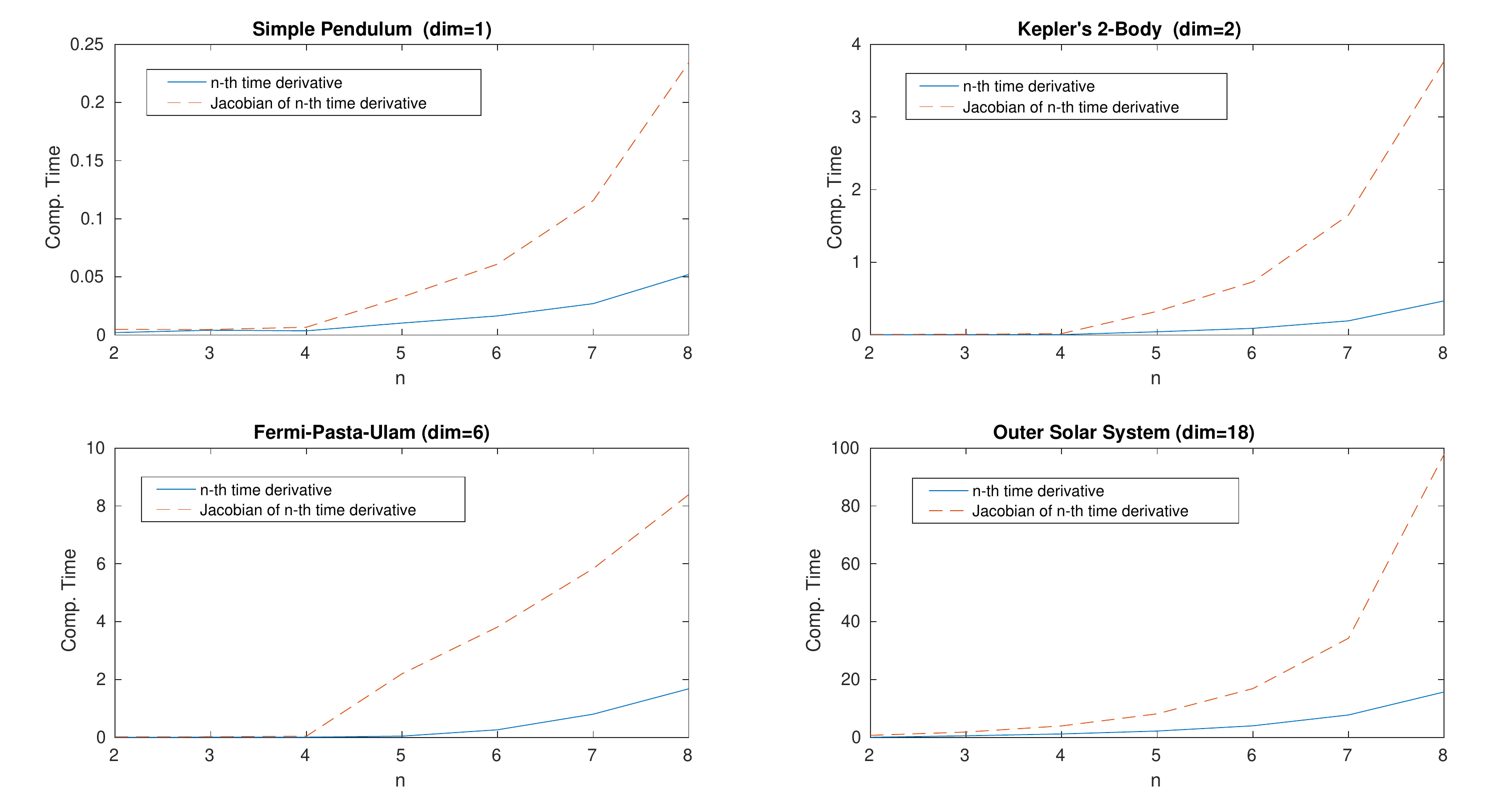}
  \caption
   {The derivative order versus time plot of 100 evaluations of each derivative corresponding to 4 different models with increasing dimension. It is worth noting that the rate of growth in time needed for higher-order derivative evaluations appears to be independent of the dimension.}
   \label{figderiv}
\end{figure}

\subsection{Comparison of Methods}
  The simulations compare the discrete Lagrangian form of the Taylor variational integrator (TVI), the discrete right Hamiltonian form of the Taylor variational integrator (HTVI), the symmetric Taylor variational integrator of 4th order (SV4), Taylor's method, and the Runge--Kutta shooting variational integrators (ShVI) (see \cite{LeSh2011_sbvi}). Overall, high-order Taylor methods perform quite well in terms of computational time versus global error. However, as the length of integration time becomes very large, the variational integrators begin to show their strength. Of the three variational integrators, the symmetric Taylor variational integrator is the most efficient. 

Comparison of the Lagrangian or Hamiltonian Taylor variational integrator to the Runge--Kutta shooting variational integrator does not result in a clear winner in terms of computational efficiency. It is well known that beyond 4th-order, Runge--Kutta (RK) methods require a higher number of stages/function evaluations, and the number of stages grows faster for vector differential equations as compared to scalar differential equations (see \cite{Bu2009}). The number of order conditions grows quite quickly. For instance a 4th-order RK method has 8 order conditions, a 7th-order RK method has 85 order conditions, and a 25th-order method has 3,231,706,871 order conditions (see \cite{Somm98}). However, a 25th-order RK method only has 313 stages, so the function evaluations grow at a much slower rate. The Taylor method must contend with the increasing cost of evaluating higher-order derivatives, which for our implementation grows at a rate of $2^n$, where $n$ is the order of the derivative. For methods less than order 10 the difference in computational cost of the Taylor variational integrator and the Runge--Kutta based shooting variational integrator did not seem significant. However, the symmetric Taylor variational integrator did exhibit lower evaluation costs than the other methods. It should be noted that the most efficient implementations of the Taylor method involve variable stepsizes, and symplectic integrators are not predisposed to variable stepsizes.

The following simulations were implemented in MATLAB.
\subsection{Simple Pendulum}
Consider the simple pendulum with unit mass and length in a gravitational field with $g=-9.8\mathrm{m/s}^2$, where $q$ is parametrized by the angle between the y-axis and the pendulum. The corresponding Lagrangian is,
$$L(q,\dot{q})=\frac{1}{2} \dot{q}^2-g(1-\cos(q)).$$
The Euler--Lagrange equation yields,
$$\ddot{q}=-g\sin(q).$$

In \figurename{\ref{figpen}}, the level sets of the corresponding Hamiltonian are compared to the trajectories generated by a 2nd-order Taylor variational integrator (TVI2) (see Example 2). The numerical solutions appear nearly identical to the level sets of the Hamiltonian, which indicates that the variational integrator exhibited good energy behavior for a variety of initial conditions.

The simulation in \figurename{\ref{figPen6}} used initial conditions $(q_0,p_0)=(\frac{\pi}{2},0)$. The 6th-order Taylor variational integrator performed well at a stepsize of $h=0.5$, while the 6th-order Taylor method failed to generate a reasonable approximation for this stepsize. The ability of the Taylor variational integrator to perform well at larger stepsizes may gives it an advantage over traditional Taylor methods.

In \figurename{\ref{figtiming}}, we compare various types of Taylor variational integrators against the shooting-based variational integrator (ShVI). The plots compare the energy error versus computational time for methods of various order. It is clear the the symmetric Taylor variational integrator (SV4) is the most efficient in this respect, but it is not so clear whether the non-symmetric Taylor variational integrators (TVI and HTVI) are more efficient than ShVI.

\begin{figure}
\centering
\begin{subfigure}{.5\textwidth}
  \centering
  \includegraphics[width=.8\linewidth]{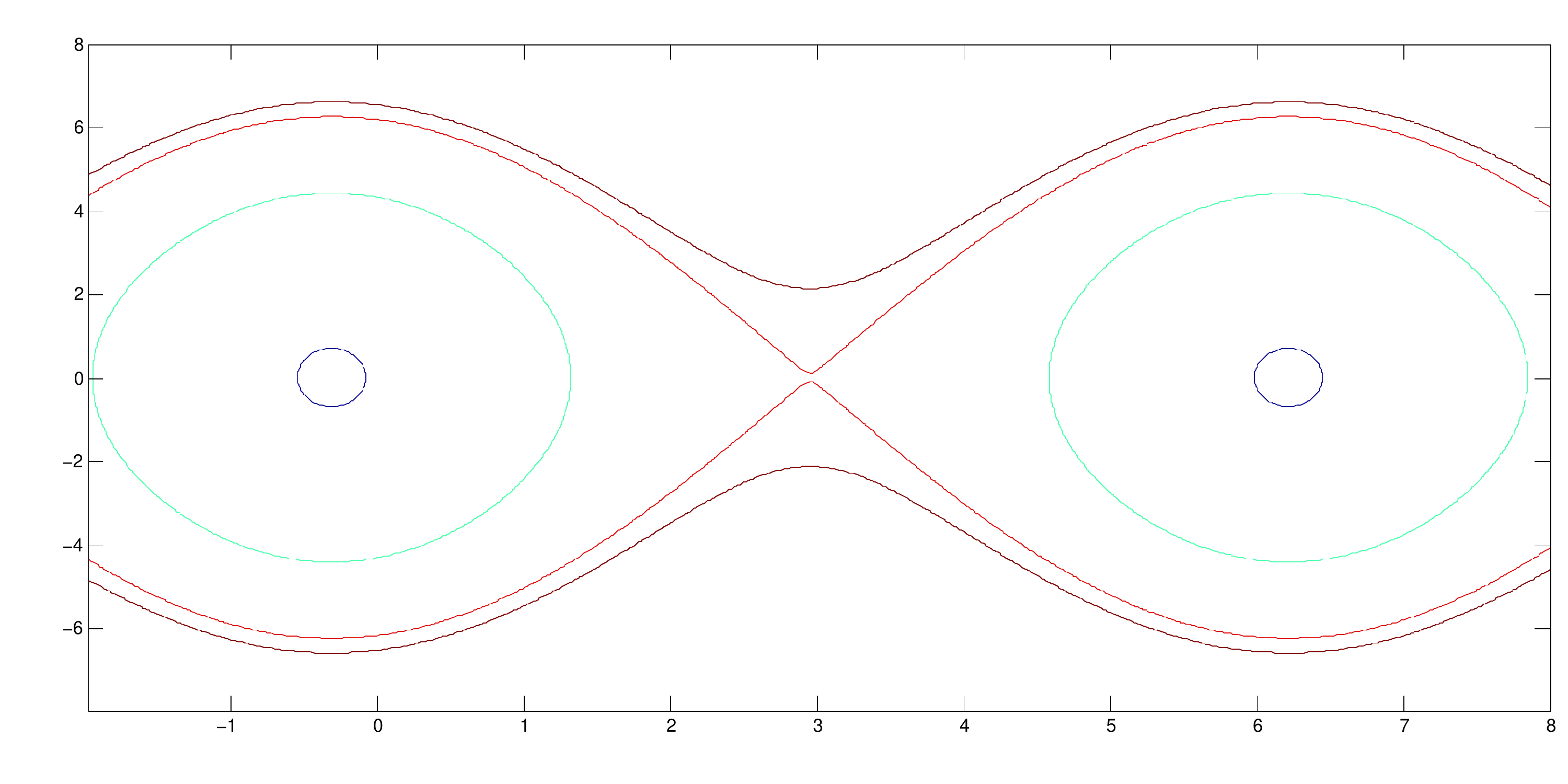}
  \caption{}
\end{subfigure}%
\begin{subfigure}{.5\textwidth}
  \centering
  \includegraphics[width=.8\linewidth]{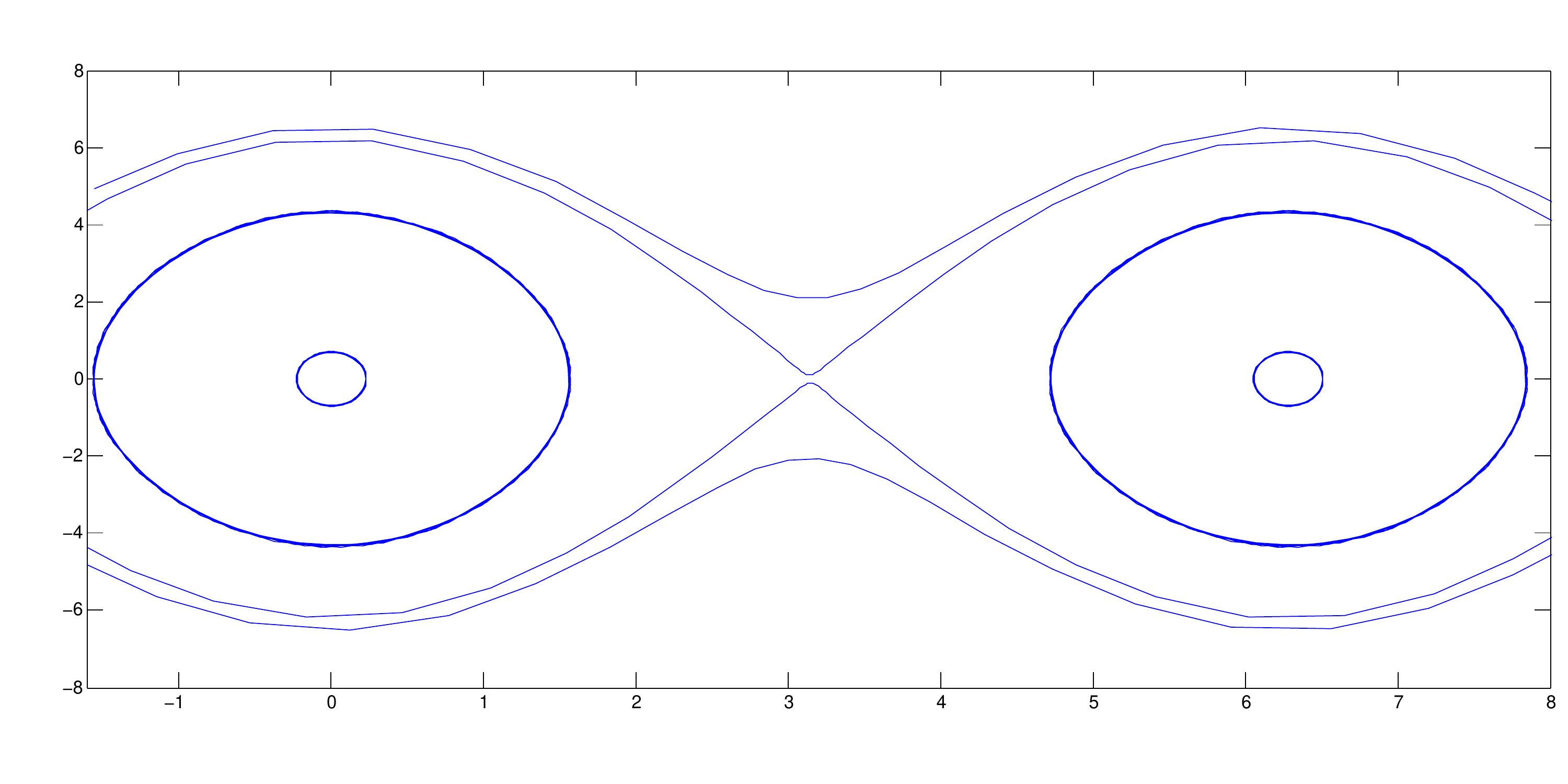}
  \caption{}
\end{subfigure}
\caption{(A) The level sets of the Hamiltonian of the simple pendulum corresponding to a variety of initial conditions. (B) The trajectories generated by TVI2 using the same initial conditions with a step size $h=0.1$ for the time interval $[0,20]$.}
\label{figpen}
\end{figure}

\begin{figure}[h]
  \includegraphics[width=0.9\textwidth]{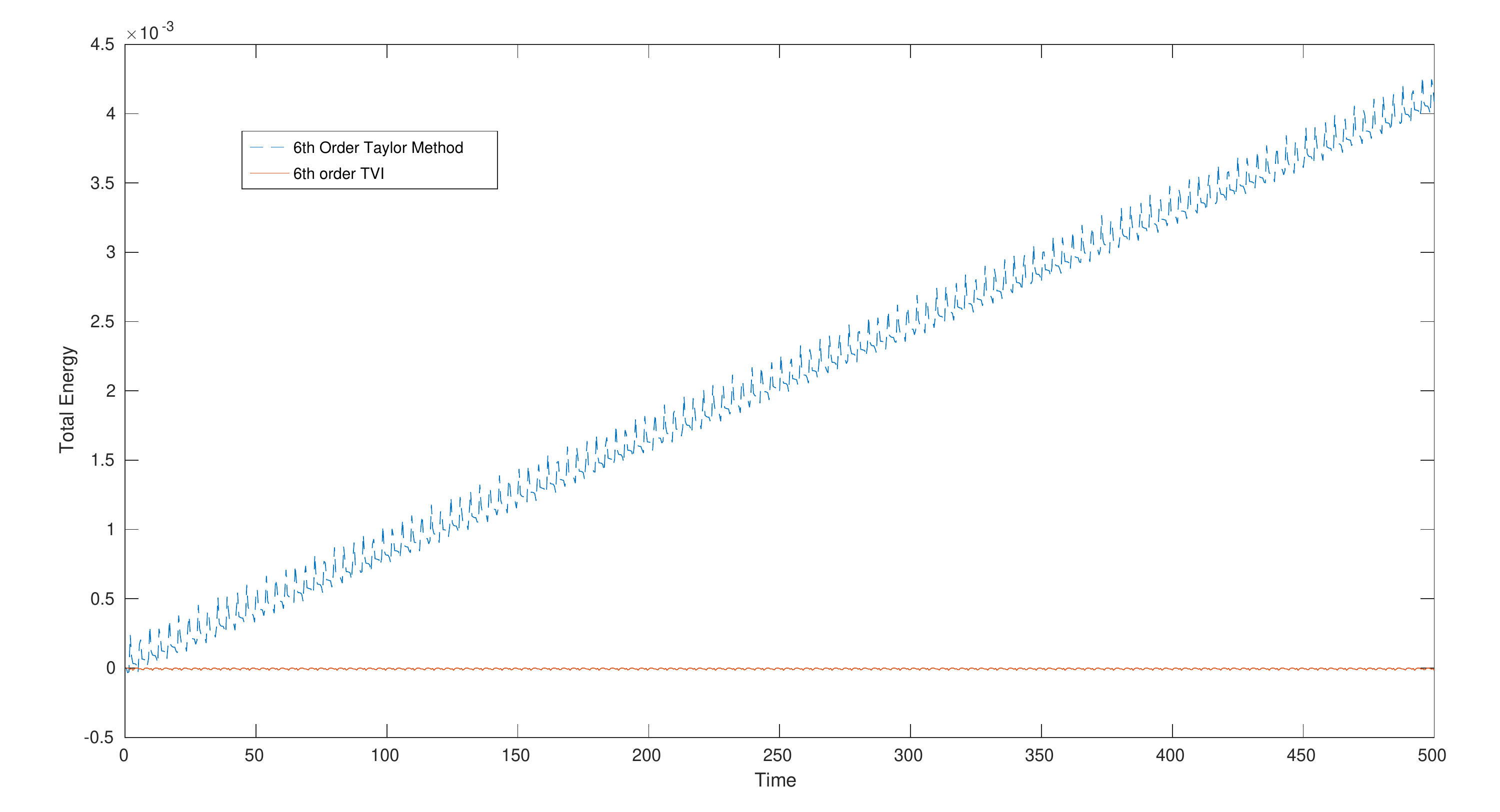}
  \caption
   {A plot of the Simple Pendulum total energy vs. time of the sixth-order integrators TVI6 and Taylor's method for a step size of $h=0.5$. At this step size and time interval, Taylor's method has significant energy drift, and as a result its accuracy suffers.}
   \label{figPen6}
\end{figure}

\subsection{Kepler's Planar $2$-Body Problem}
Consider two bodies interacting under mutual gravity and set one body as the center of the coordinate system (see \cite{HaLuWa2006}). Thus, constraining them to lie in a plane, we have Kepler's planar 2-body problem with corresponding Lagrangian,
$$L(\mathbf{q},\mathbf{\dot{q}})=\frac{1}{2}(\dot{q}_1^2+\dot{q}_2^2)+(q_1^2+q_2^2)^{-1/2}.$$
Note here we are using $q_1$ and $q_2$ as the first and second components of $\mathbf{q}$. This in turn gives us the Euler--Lagrange equations,
$$\mathbf{\ddot{q}}=\begin{bmatrix} \frac{-q_1}{(q_1^2+q_2^2)^{3/2}} \\ \frac{-q_2}{(q_1^2+q_2^2)^{3/2}} \end{bmatrix}.$$

Our simulations used initial conditions $\mathbf{q}_0=\begin{bmatrix} 1 \\ 0 \end{bmatrix}$ and $\mathbf{p}_0=\begin{bmatrix} 0 \\ 0.8 \end{bmatrix}$. \figurename{\ref{figkep25}} compares various Taylor variational integrators to Taylor methods of the same order using a stepsize of $h=0.25$. The trajectories of the Taylor methods for this stepsize behave poorly, while variational integrators show good qualitative performance.

\figurename{\ref{SVHd}} compares the St\"ormer--Verlet method (SV) to the discrete Hamiltonian composition method (SVHd) discussed in section \ref{discHam}. Given that the St\"ormer--Verlet method is explicit, while SVHd is implicit, it is no surprise that the St\"ormer--Verlet method has lower computational cost. However, SVHd does exhibit lower energy error and performs slightly better qualitatively, so when the problem is non-separable (and SV is implicit), SVHd may be a better alternative. 

\begin{figure}[h]
  \includegraphics[width=0.9\textwidth]{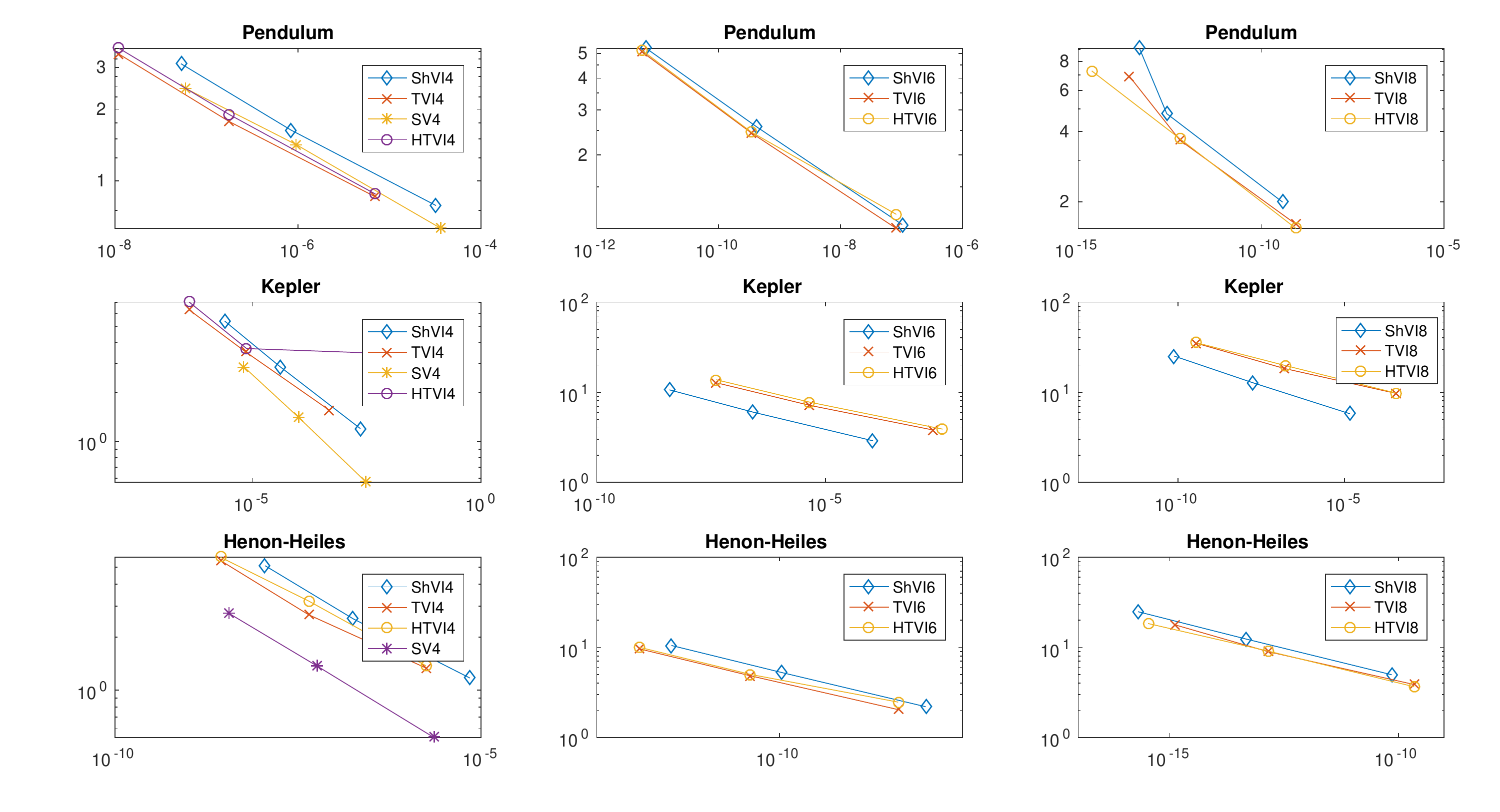}
  \caption
   {Plots of the average energy error versus computational time for the various variational integrators. The 4th-order symmetric Taylor variational integrator (SV4) is the clear winner in terms of efficiency, while comparisons of TVI, HTVI, and ShVI are mixed.}
   \label{figtiming}
\end{figure}

\begin{figure}[h]
  \includegraphics[width=0.9\textwidth]{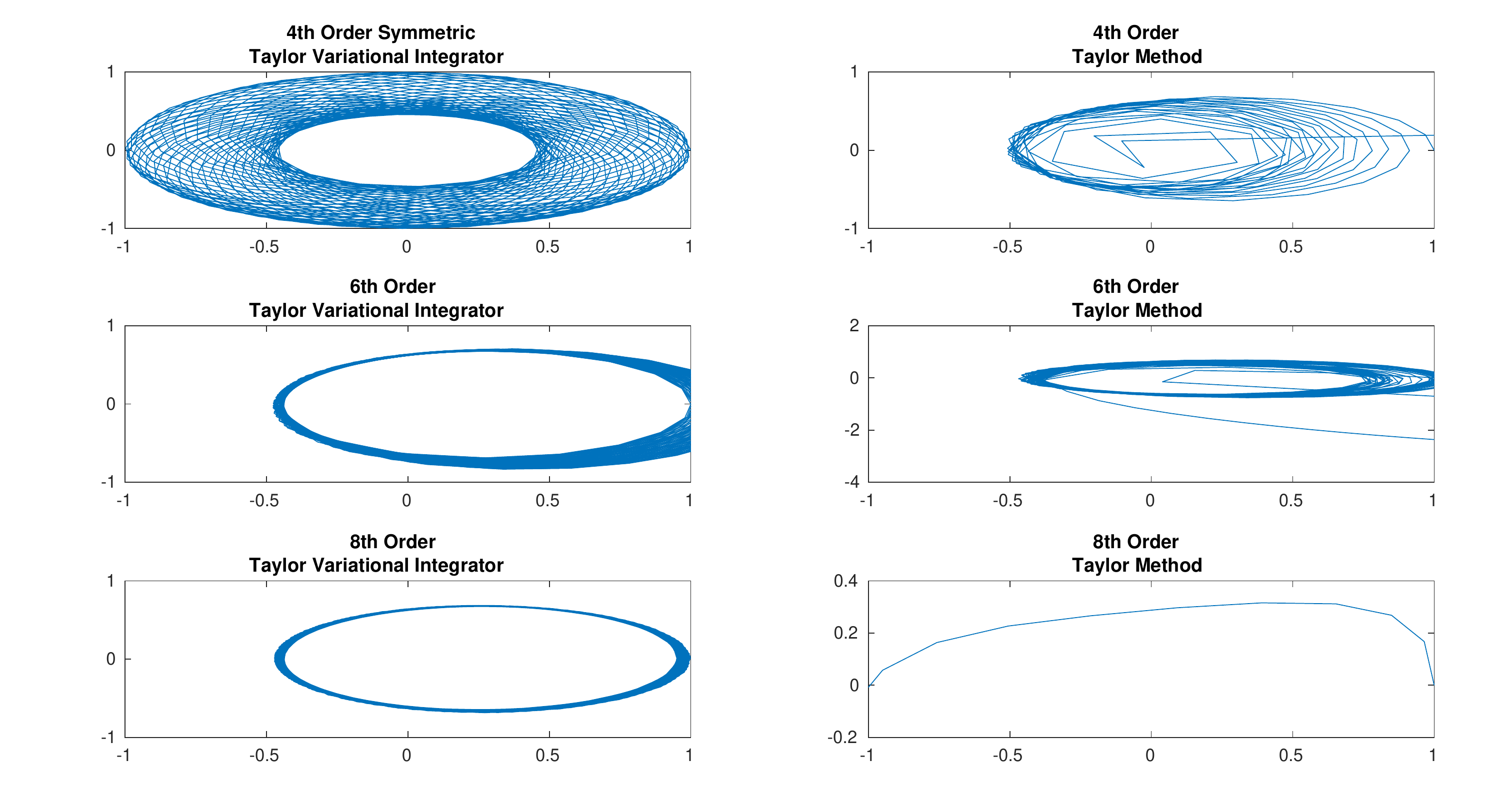}
  \caption
   {Position plots of Kepler's planar 2-body problem as generated by various integrators with a time step of $h=0.25$ over a time interval of $[0,250]$. The Taylor variational integrators exhibit close to the correct behavior, while the various Taylor methods all fail to capture the behavior of the system.}
   \label{figkep25}
\end{figure}

\begin{figure}
  \includegraphics[width=0.9\textwidth]{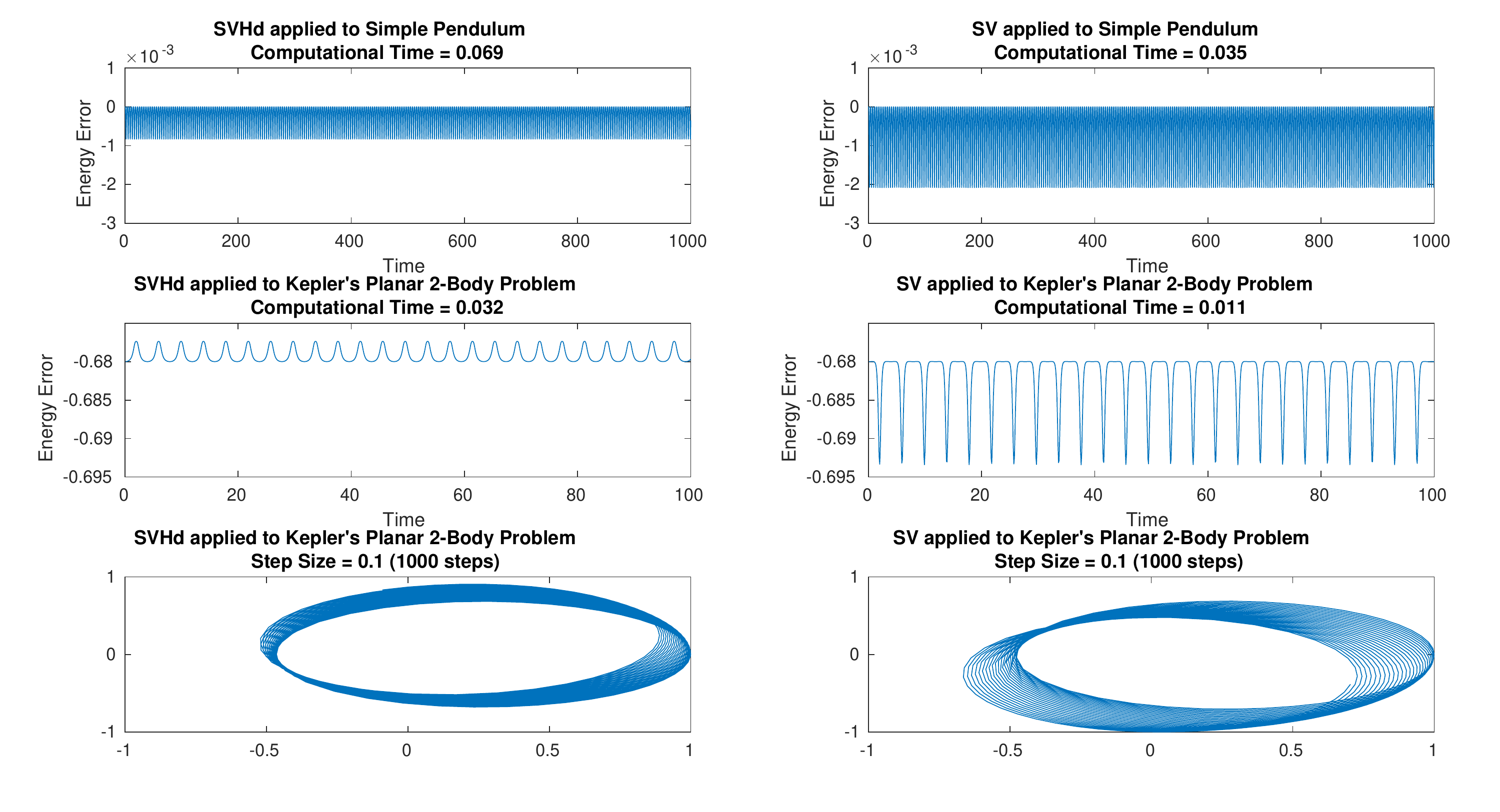}
  \caption
   {This plot compares the performance of St\"ormer--Verlet (SV) and the discrete Hamiltonian composition method (SVHd) from section \ref{discHam}. SVHd exhibits a much smaller amplitude in the energy error, as compared to SV, but the implicit nature of SVHd is reflected in the increased computational cost. Clearly, SV is preferable for separable problems, but for non-separable problems SVHd may be the better choice.}
   \label{SVHd}
\end{figure}

\begin{figure}[h]
  \includegraphics[width=0.9\textwidth]{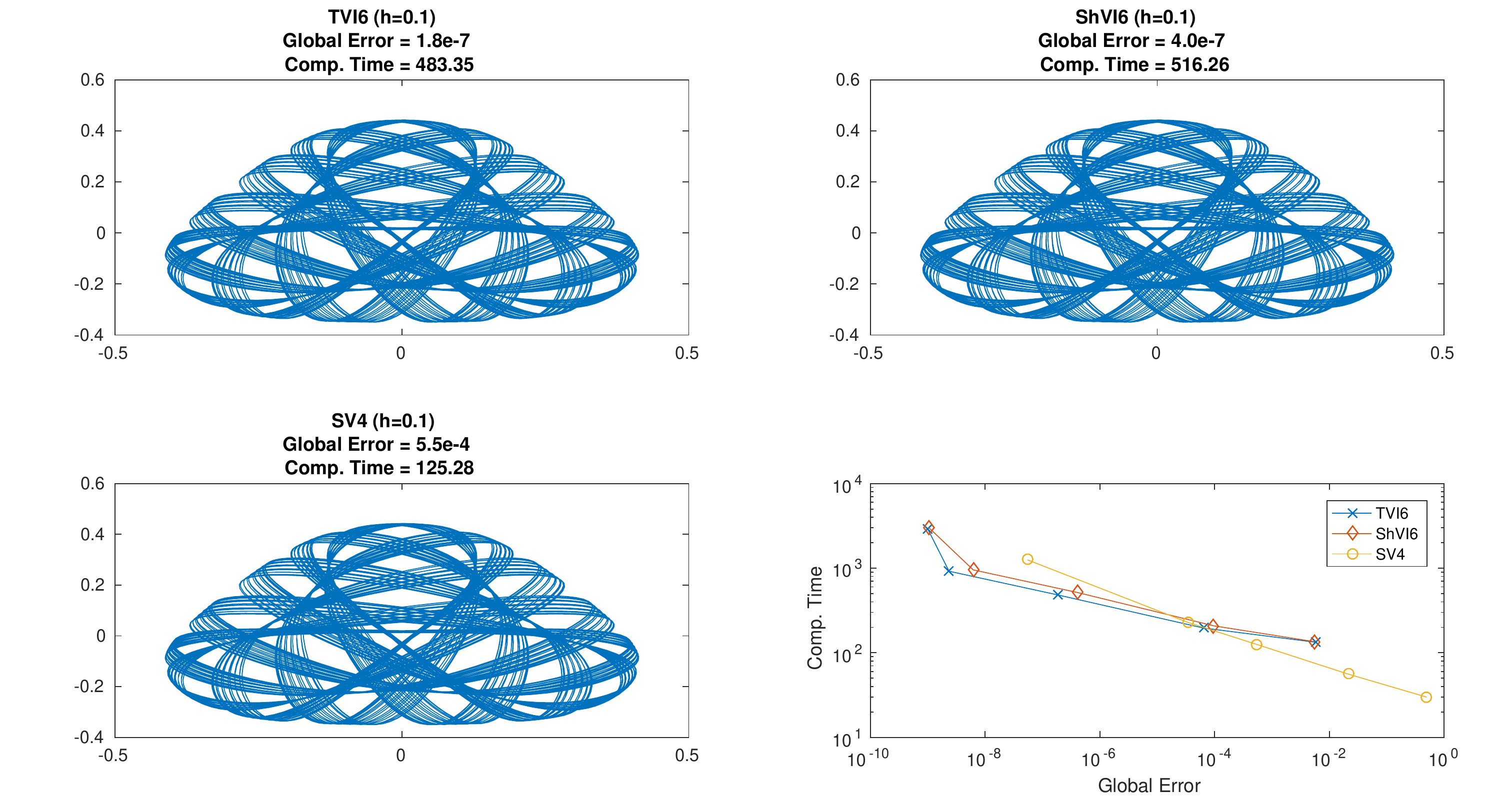}
  \caption
   {The Henon-Heiles model simulated over the time interval $[0,1000]$. The bottom right plot compares the global error versus computational time of the 6th-order Taylor variational integrator (TVI6), the 6th-order Runge--Kutta based shooting variational integrator (ShVI6), and the 4th-order symmetric Taylor method (SV4).}
   \label{fighenon}
\end{figure}

\begin{figure}
  \includegraphics[width=0.9\textwidth]{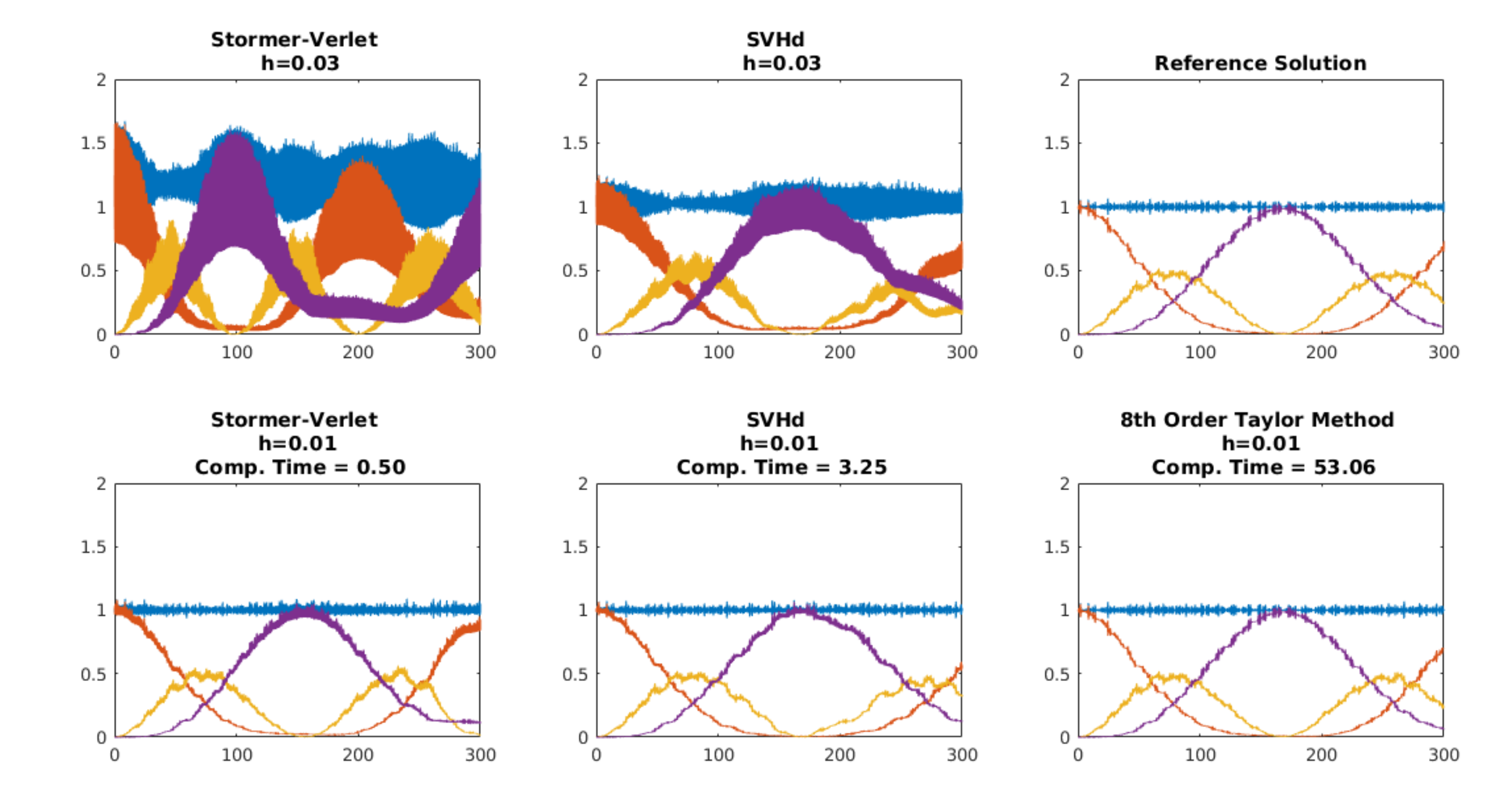}
  \caption
   {A comparison of St\"ormer--Verlet (SVHd) and the 8th-order Taylor method for the Fermi-Pasta-Ulam model. For $h=0.03$, the St\"ormer--Verlet method is on the cusp of being linearly unstable. For $h=0.01$, the methods all present a similar picture to the reference solution, but their global errors are quite large and none of them exhibit good accuracy. }
   \label{figFPU}
\end{figure}

\begin{figure}[h]
  \includegraphics[width=0.9\textwidth]{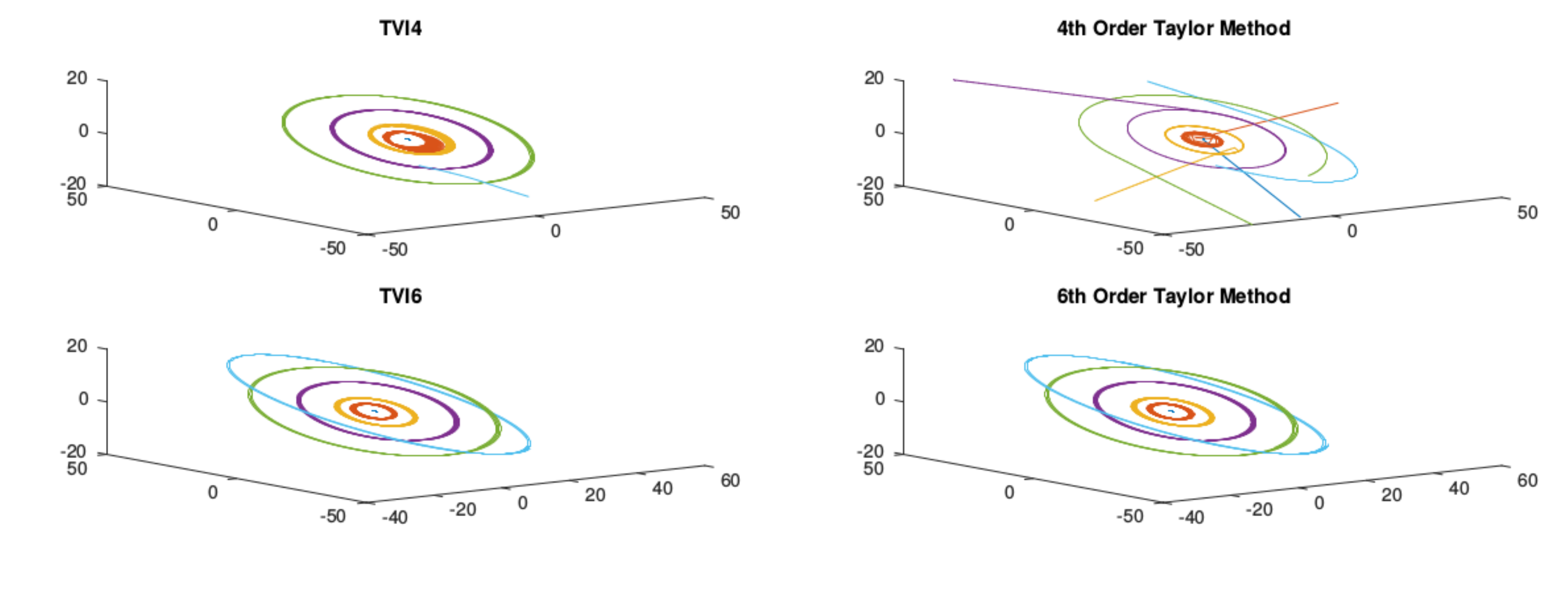}
  \caption
   {The sun and 5 outer planets simulated over the time interval [0,200000] with a step size of $h=400$ (days). The stepsize is too large for the 4th-order methods to give a qualitatively accurate representation, but both 6th-order methods performed well qualitatively.}
   \label{figout1}
\end{figure}

\subsection{Henon-Heiles Model}
 The Henon--Heiles model attempts to capture the dynamics of a galaxy with cylindrical symmetry (see \cite{HaLuWa2006} for more info). The Hamiltonian is given by, $H(p,q)=\frac{1}{2}(p_1^2+p_2^2)+U(q)$, where $U(q)=\frac{1}{2}(q_1^2+q_2^2)+q_1^2q_2-\frac{1}{3}q_2^3$. The corresponding Euler--Lagrange equation is,
 \begin{align*}
 \mathbf{\ddot{q}} = \begin{bmatrix} -q_1-2q_1q_2 \\ -q_2-q_1^2+q_2^2 \end{bmatrix}.
 \end{align*}
 It is known that the dynamics become chaotic at higher energy levels. The following simulations were conducted with an initial energy level of $H_0=\frac{1}{12}$ (see \figurename{\ref{fighenon}}) and $H_0=\frac{1}{8}$ (see \figurename{\ref{figtiming}}). The second energy value corresponds to a chaotic system.
 
 In \figurename{\ref{fighenon}}, we compare the 6th-order Taylor variational integrator (TVI6), the 6th-order Runge--Kutta shooting-based variational integrator (ShVI6), and the 4th-order symmetric Taylor variational integrator (SV4) applied to the Henon-Heiles model with $H_0=\frac{1}{12}$. For global errors between $10^{-1}$ and $10^{-5}$, SV4 is the more efficient method. Amongst the higher-order methods, TVI6 and ShVI6 appear to be the more efficient methods. A 6th-order symmetric Taylor variational integrator would be even more efficient for higher-order accuracy.
 
\subsection{Fermi-Pasta-Ulam Model}
  The Fermi-Pasta-Ulam (FPU) model has a particularly distinguished place in the history of numerical simulations and nonlinear dynamics (see \cite{Ford1992}). We apply the modified model as outlined in \cite{HaLuWa2006}, consisting of a sequence of 6 mass points, fixed at both ends connected on opposite sides by a series of soft nonlinear springs and stiff linear springs. Letting $\{q_i,p_i\}_{i=1}^6$ denote the displacements and velocities of the mass points, the corresponding Hamiltonian is given by,
\begin{equation*}
H(p,q) = \frac{1}{2} \sum^3_{i=1} (p^2_{2i-1}+p^2_{2i}) + \frac{\omega^2}{4} \sum_{i=1}^6 (q_{2i}-q_{2i-1})^2 + \sum_{i=0}^6 (q_{2i+1}-q_{2i})^4,
\end{equation*}
where $\omega=50$. By using the change of variables,
\begin{align*}
x_{0,i} &= (q_{2i}+q_{2i-1})/\sqrt{2}, \qquad x_{1,i}=(q_{2i}-q_{2i-1})/\sqrt{2}, \\
y_{0,i} &= (p_{2i}+p_{2i-1})/\sqrt{2}, \qquad y_{1,i}=(p_{2i}-p_{2i-1})/\sqrt{2},
\end{align*}
the resulting Hamiltonian system has a nearly conserved quantity $I=I_1+\dots+I_3$, where
\begin{equation*}
I_j(x_{1,j},y_{1,j})=\frac{1}{2}(y^2_{1,j}+\omega^2x^2_{1,j})
\end{equation*}
is the energy of the $j$th stiff spring. Despite the significant energy exchange between individual springs, the total oscillatory energy, $I$, remains near constant. Our simulations used initial values of,
\begin{equation*}
\begin{bmatrix} x_{0,1} \\ x_{0,2} \\ x_{0,3} \\ x_{1,1} \\ x_{1,2} \\ x_{1,3} \end{bmatrix} = \begin{bmatrix} 1 \\ 0 \\ 0 \\ 1/\omega \\ 0 \\ 0 \end{bmatrix}, \qquad \begin{bmatrix} y_{0,1} \\ y_{0,2} \\ y_{0,3} \\ y_{1,1} \\ y_{1,2} \\ y_{1,3} \end{bmatrix} = \begin{bmatrix} 1 \\ 0 \\ 0 \\ 1 \\ 0 \\ 0 \end{bmatrix}. 
\end{equation*}

\figurename{\ref{figFPU}} compares the St\"ormer--Verlet method to SVHd. The first couple of plots use a stepsize of $h=0.03$, which is on the boundary of the linear stability of St\"ormer--Verlet (i.e. $h\omega=1.5$). SVHd does appear to be qualitatively more accurate, but neither method does well at this stepsize. For $h=0.01$, both methods give a much better qualitative representation of the system, but their global errors are still too large to be considered accurate. None of the methods in this paper are appropriate for a highly-oscillatory model such as the FPU model. For an accurate solution, one should consider either the IMEX method (see \cite{StGr2009}) or Filon-type methods (see \cite{IsNor2000}). The combination of exponential type integrators with symplectic and energy-preserving integrators was also recently considered in \cite{ShLe2017}.

\subsection{Outer Solar System}
Consider the motion of the five outer planets (including Pluto) relative to the sun. The corresponding Hamiltonian for this N-body problem is given by,
\begin{equation*}
H(p,q) = \frac{1}{2} \sum^5_{i=0} \frac{1}{m_i} p_i^Tp_i - G \sum^5_{i=1} \sum^{i-1}_{j=0} \frac{m_im_j}{\|q_i-q_j\|},
\end{equation*}
where $G=2.95912208286 \cdot 10^{-4}$. The initial data and masses is taken from Section 1.2.4 of \cite{HaLuWa2006}, and corresponds to September 5, 1994 at 0h00. In \figurename{\ref{figout1}}, we compare the 4th and 6th-order Taylor variational integrators to the 4th and 6th-order Taylor methods. The simulations was over the time period $[0,200000]$, and the stepsize was $h=400$ (days). The 4th-order methods did not produce a useful simulation at this stepsize, but both 6th-order integrators give a good representation of the system.

\section{Conclusions and Future Directions}
The Taylor variational integrators provide a way to build high-order symplectic integrators and include many of the classic symplectic integrators as special cases, i.e., symplectic Euler and St\"ormer--Verlet. This provides a framework for importing the large body of literature on the efficient construction of high-order Taylor integrators in order to construct similarly high-order symplectic integrators.

In particular, these methods can be viewed as a symplectic correction to higher-order Taylor methods that typically converges in a small number of iterations. By viewing these as predictor-corrector methods, one can interpolate between Taylor methods and Taylor variational integrators, and it would be interesting to see the extent to which a fixed number of iterations of the symplectic corrector can improve upon the performance of Taylor integrators for realistic problems.

The numerical simulations demonstrate that the geometric structure-preserving properties of symplectic integrators can be important for achieving numerical stability of long time simulations, so it should be of great interest to the computational astrophysics community to combine the high-order accuracy of high-order Taylor integrators with the geometric structure-preserving properties of variational integrators.

The most efficient implementations of the Taylor method utilize a variable stepsize, and extending variable stepsizes to the variational integrator framework is an area that deserves continued research. We are currently considering an approach based on the combination of Hamiltonian variational integrators and the Poincar\'e transformation that is quite promising. In particular, we note that the use of Hamiltonian as opposed to Lagrangian variational integrators is critical, as the Poincar\'e transformed Hamiltonian is degenerate, and there is no corresponding Lagrangian formulation.

%

\section*{Acknowledgements} The authors were supported in part by NSF grants DMS-1001521, CMMI-1029445, DMS-1065972, CMMI-1334759, DMS-1411792, and NSF CAREER award DMS-1010687.

\appendix
\section{Detailed Proofs}
Given an Euler--Lagrange equation of the form,
$$\ddot{q}(t)=f(q(t),\dot{q}(t),t),$$
we denote the exact solution of the Euler--Lagrange boundary-value problem with boundary conditions $(q_0,q_1)$ by $(q(t),v(t))$. We seek an estimate of the true initial velocity, ${v}_0$, for the corresponding Euler--Lagrange initial-value problem, with order of accuracy $r$. Let us denote this estimate by $\tilde{v}_0$. Given a one-step method, $\hat{\Psi}_h:TQ\rightarrow TQ$, with order of accuracy $r+1$, we solve for the initial velocity $\tilde{v}_0$, such that,
\begin{equation}
\pi_Q\circ\hat{\Psi}_h(q_0,\tilde{v}_0)=q_1,\label{tildev0}
\end{equation}
where $\pi_Q:TQ\rightarrow Q$ is the canonical projection. Let $\Phi_h:TQ\rightarrow TQ$ be the exact time-$h$ flow map of the Euler--Lagrange initial-value problem. By definition, the exact Euler--Lagrange flow applied to the initial condition $(q_0,v_0)$ is a solution of the Euler--Lagrange boundary-value problem with boundary conditions $(q_0,q_1)$, where
\begin{equation}
\pi_Q\circ\Phi_h(q_0,v_0)=q_1.\label{q1}
\end{equation}

Consider a Taylor method with order of accuracy $r$ and $r+1$,
\begin{equation}
\Psi_h(q_0,\tilde{v}_0)=\left(\sum\nolimits_{k=0}^r \frac{h^k}{k!}q^{(k)}(0),\sum\nolimits_{k=1}^{r+1} \frac{h^{k-1}}{(k-1)!}q^{(k)}(0)\right)\label{Psi}
\end{equation}
and
\begin{equation}
\hat{\Psi}_h(q_0,\tilde{v}_0)=\left(\sum\nolimits_{k=0}^{r+1} \frac{h^k}{k!}q^{(k)}(0),\sum\nolimits_{k=1}^{r+2} \frac{h^{k-1}}{(k-1)!}q^{(k)}(0)\right),\label{Psihat}
\end{equation}
where $q^{(k)}(0)$ is calculated by considering the prolongations of the Euler--Lagrange vector field, and evaluating it at $(q_0,\tilde{v}_0)$. An analogous approach, involving the prolongation of the Euler--Lagrange vector field at both the initial and final time, which can be viewed as a two-point Taylor method, was used to develop a prolongation-collocation variational integrator in \cite{LeSh2011}.
\begin{lemma} 
$\tilde{v}_0$ as defined by, \eqref{tildev0} and \eqref{Psihat}, approximates $v_0$ to at least $\mathcal{O}(h^{r+1})$.
\end{lemma}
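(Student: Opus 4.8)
The plan is to exploit the fact that both the true initial velocity $v_0$ and its approximation $\tilde{v}_0$ are pinned down by the requirement that the configuration component of a time-$h$ map lands on the same endpoint $q_1$. By definition \eqref{tildev0}, $\pi_Q\circ\hat{\Psi}_h(q_0,\tilde{v}_0)=q_1$, while the exact flow obeys $\pi_Q\circ\Phi_h(q_0,v_0)=q_1$. Since $\hat{\Psi}_h$ is an $(r+1)$-order accurate one-step method, its local error against $\Phi_h$ is $\mathcal{O}(h^{r+2})$, so that $\pi_Q\circ\hat{\Psi}_h(q_0,v_0)=q_1+\mathcal{O}(h^{r+2})$. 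Comparing the two evaluations of $\pi_Q\circ\hat{\Psi}_h(q_0,\cdot)$ then shows that the endpoint discrepancy incurred by replacing $\tilde{v}_0$ with $v_0$ is only $\mathcal{O}(h^{r+2})$.

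The crux is a structural observation about the Taylor map \eqref{Psihat}. Its configuration component expands as
\[
\pi_Q\circ\hat{\Psi}_h(q_0,w)=q_0+hw+\sum_{k=2}^{r+1}\frac{h^k}{k!}\,q^{(k)}(0),
\]
where each coefficient $q^{(k)}(0)$ with $k\ge 2$ is produced by the prolongation of the Euler--Lagrange vector field at $(q_0,w)$. The decisive feature is that the dependence on the velocity argument $w$ appears at leading order through the single linear term $hw$, while every remaining contribution carries a factor of $h^2$ or higher. Under the standing assumption that $f$ and its prolongations are Lipschitz, these higher-order terms are Lipschitz in $w$ with a constant of size $Ch^2$, giving the estimate
\[
\bigl\|\pi_Q\circ\hat{\Psi}_h(q_0,w_1)-\pi_Q\circ\hat{\Psi}_h(q_0,w_2)-h(w_1-w_2)\bigr\|\le Ch^2\|w_1-w_2\|.
\]

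Applying this with $w_1=\tilde{v}_0$, $w_2=v_0$ and using the triangle inequality gives $h\|\tilde{v}_0-v_0\|\le\|\pi_Q\circ\hat{\Psi}_h(q_0,\tilde{v}_0)-\pi_Q\circ\hat{\Psi}_h(q_0,v_0)\|+Ch^2\|\tilde{v}_0-v_0\|$. The first term on the right is $\mathcal{O}(h^{r+2})$ by the first paragraph, so after rearranging we obtain $h(1-Ch)\|\tilde{v}_0-v_0\|\le\mathcal{O}(h^{r+2})$. For $h$ small enough that $1-Ch\ge\tfrac12$, dividing by $h$ yields $\|\tilde{v}_0-v_0\|\le\mathcal{O}(h^{r+1})$, which is the assertion of the lemma.

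The main obstacle, and essentially the only point demanding genuine care, is the uniform control of the velocity-dependence of the higher-order Taylor coefficients $q^{(k)}(0)$: one must confirm that the prolongation of the Euler--Lagrange vector field produces coefficients whose Lipschitz constants in $w$ do not blow up as $h\to0$, so that the correction beyond the leading $hw$ term is truly $\mathcal{O}(h^2)$ in its sensitivity to the velocity. Granting this, the single order that is surrendered---passing from an $\mathcal{O}(h^{r+2})$ endpoint discrepancy to an $\mathcal{O}(h^{r+1})$ velocity estimate---is exactly the price of dividing by the factor $h$ attached to the leading $hw$ term.
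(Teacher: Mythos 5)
Your argument is correct and is essentially the paper's own proof in slightly more abstract dress: your $\mathcal{O}(h^{r+2})$ endpoint discrepancy is the Taylor remainder $R_{r+1}(h)$, your $\mathcal{O}(h^{2})$-Lipschitz control of the velocity dependence of the higher coefficients is the paper's bound $\sum_{k=1}^{r}\frac{h^{k}}{(k+1)!}M_{k}$ on the prolonged vector field, and the concluding absorb-and-divide-by-$h$ step is identical. Your phrasing via the local truncation error of a generic $(r+1)$-order one-step method is precisely the generalization the paper itself points out in the remark following its proof.
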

\begin{proof}
Solving $\pi_Q\circ\hat\Psi_h(q_0,\tilde{v}_0)=q_1$ for $\tilde{v}_0$ yields,

$$\tilde{v}_0=\frac{q_1-q_0}{h}-\sum\nolimits_{k=1}^{r} \frac{h^k}{(k+1)!}(f^{(k-1)}(q_0,\tilde{v}_0,0)).$$
Since the exact solution $q(t) \in C^{r+2}([0,h])$, using Taylor's Theorem, we have,

$$q_1=q_0+v_0h+\sum\nolimits_{k=2}^{r+1} \frac{h^k}{k!}f^{(k-2)}(q_0,v_0,0)+R_{r+1}(h).$$
Solving for $v_0$ yields,

$$v_0=\frac{q_1-q_0}{h}-\sum\nolimits_{k=1}^{r} \frac{h^k}{(k+1)!}f^{(k-1)}(q_0,v_0,0)-\frac{R_{r+1}(h)}{h}.$$
Now evaluating the norm of the difference we have,
$$\|\tilde{v}_0-v_0\|=\left\| -\sum\nolimits_{k=1}^{r} \frac{h^k}{(k+1)!}(f^{(k-1)}(q_0,\tilde{v}_0,0)-f^{(k-1)}(q_0,v_0,0))+\frac{R_{r+1}(h)}{h}\right\|.$$
Since $q(t) \in C^{r+2}([0,h])$ each of $f^{(i-1)}$ is Lipschitz continuous in its arguments for $i=1,2,\ldots,r$. Let $M_i$ be the Lipschitz constant for $f^{(i-1)}$ over the compact interval $[0,C]$ with respect to velocity, and $C>0$ can be chosen so that $M_i$, $i=1,2,\ldots,p,$ is bounded.  Using the triangle inequality, we have,
\[
\|\tilde{v}_0-v_0\| \leq \sum\nolimits_{k=1}^{r}\frac{h^k}{(k+1)!}M_k \|\tilde{v}_0-v_0\|+\left\|\frac{R_{r+1}(h)}{h}\right\|.
\]
Rearranging, we have,
\[
\|\tilde{v}_0-v_0\|\left(1-\sum\nolimits_{k=1}^r \frac{h^k}{(k+1)!}M_k\right) \leq \left\|\frac{R_{r+1}(h)}{h}\right\|\leq \frac{\mathcal{O}(h^{r+2})}{h}=\mathcal{O}(h^{r+1}).
\]
By continuity, there exists $\tilde{C}$ satisfying $0<\tilde{C}<C$, such that for all $h$ satisfying $0<h<\tilde{C}$, the term inside the parenthesis on the leftmost expression is positive and bounded away from zero. That concludes the proof.\end{proof}

\begin{remark*}
It is worth noting that a similar proof may be given for any $(r+1)$-order one-step method. This is due to the fact that any $(r+1)$-order one-step method agrees with the $(r+1)$-order Taylor's method up to a local truncation error of order $\mathcal{O}(h^{r+2})$. Thus, the only change in the error term in the proof would be to replace $R_{r+1}(h)$ by the sum of the local truncation error of the one-step method and $R_{r+1}(h)$, which are both $\mathcal{O}(h^{r+2})$. Thus, this result can be generalized to any one-step method of the desired order.
\end{remark*}

Using this result, we can show that starting our $r$-order Taylor method at $\tilde{v_0}$, rather than at $v_0$, will not affect the order of accuracy of the method.

\begin{lemma}
A $r$-order Taylor method, defined by \eqref{Psi}, with initial conditions $(q_0,\tilde{v}_0)$, where $\tilde{v}_0$ is defined by \eqref{tildev0}, is accurate to at least $\mathcal{O}(h^{r+1})$ for the Euler--Lagrange boundary-value problem with boundary conditions $(q_0,q_1)$.
\end{lemma}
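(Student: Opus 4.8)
The plan is to compare the numerical trajectory $\Psi_h(q_0,\tilde{v}_0)$ not directly to the boundary-value solution, but first to the Taylor method launched from the \emph{exact} initial velocity $v_0$, for which $\Phi_h(q_0,v_0)$ solves the boundary-value problem by definition \eqref{q1}. By the triangle inequality,
$$\|\Psi_h(q_0,\tilde{v}_0)-\Phi_h(q_0,v_0)\| \le \|\Psi_h(q_0,\tilde{v}_0)-\Psi_h(q_0,v_0)\| + \|\Psi_h(q_0,v_0)-\Phi_h(q_0,v_0)\|,$$
and I would bound the two terms separately. The second term is simply the local truncation error of an $r$-order Taylor method evaluated at the exact initial data, which is $\mathcal{O}(h^{r+1})$ by construction of \eqref{Psi}, since the coefficients $q^{(k)}(0)$ are then the genuine Taylor coefficients of the exact trajectory $\Phi_t(q_0,v_0)$ in both the position and velocity slots.

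The first term measures the sensitivity of the Taylor map to a perturbation of the launch velocity from $v_0$ to $\tilde{v}_0$. Here I would invoke the preceding lemma, which gives $\|\tilde{v}_0-v_0\| = \mathcal{O}(h^{r+1})$. Subtracting the two expressions in \eqref{Psi} componentwise, the position slot differs by $h(\tilde{v}_0-v_0)$ plus terms $\tfrac{h^k}{k!}\bigl(q^{(k)}(q_0,\tilde{v}_0)-q^{(k)}(q_0,v_0)\bigr)$ for $k\ge 2$, while the velocity slot differs by $(\tilde{v}_0-v_0)$ plus terms $\tfrac{h^{k-1}}{(k-1)!}\bigl(q^{(k)}(q_0,\tilde{v}_0)-q^{(k)}(q_0,v_0)\bigr)$ for $k\ge 2$. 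Since $q(t)\in C^{r+2}$, each prolongation $q^{(k)}$ is Lipschitz in its velocity argument with a bounded constant on a suitable compact set (exactly as exploited in the previous lemma), so every such difference is itself $\mathcal{O}(h^{r+1})$. Counting powers of $h$, the dominant contribution is the bare velocity perturbation in the velocity slot, giving $\mathcal{O}(h^{r+1})$, with the position slot and all higher prolongation terms contributing strictly smaller powers ($\mathcal{O}(h^{r+2})$ or less). Hence the first term is also $\mathcal{O}(h^{r+1})$, and combining the two bounds yields the claim at the endpoint; replacing $h$ by $c_i h\le h$ gives the same estimate at each quadrature node, which is the form actually used in the order theorem.

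The main obstacle is the bookkeeping in the first term, and in particular the observation that $\Psi_h$ cannot be treated as a single fixed polynomial evaluated at two different velocities, because its coefficients $q^{(k)}(q_0,\cdot)$ themselves depend on the launch velocity. The substantive ingredient is therefore the Lipschitz continuity of the prolongations $q^{(k)}$ in the velocity slot---which is precisely where the regularity hypothesis $q(t)\in C^{r+2}$ enters---together with the bound $\|\tilde{v}_0-v_0\|=\mathcal{O}(h^{r+1})$ from the previous lemma. Once these are in hand, the remaining work is just tracking the powers of $h$ to confirm that no term degrades the $\mathcal{O}(h^{r+1})$ rate, so the proof reduces to a routine triangle-inequality estimate rather than any new analytic input.
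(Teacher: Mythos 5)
Your argument is correct, but it splits the error differently from the paper. You compare $\Psi_h(q_0,\tilde v_0)$ first to $\Psi_h(q_0,v_0)$ and then to the exact flow $\Phi_h(q_0,v_0)$; the paper instead inserts the \emph{exact} flow launched from the perturbed velocity, writing $\|(q(t),v(t))-(q_d(t),v_d(t))\|\le\|(q(t),v(t))-(\tilde q(t),\tilde v(t))\|+\|(\tilde q(t),\tilde v(t))-(q_d(t),v_d(t))\|$ with $(\tilde q(t),\tilde v(t))=\Phi_t(q_0,\tilde v_0)$. In the paper's version the velocity-perturbation term is dispatched in one line by well-posedness of the initial-value problem (Lipschitz dependence of the exact flow on the initial velocity, giving $M\|v_0-\tilde v_0\|=\mathcal{O}(h^{r+1})$), and the remaining term is the local truncation error of the $r$-order Taylor method applied at $(q_0,\tilde v_0)$. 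Your version pushes the perturbation through the Taylor polynomial instead, which forces the coefficient-by-coefficient bookkeeping you describe and requires Lipschitz continuity of each prolongation $q^{(k)}$ in the velocity argument---an ingredient the paper only invokes for the preceding lemma. Both routes rest on the same two facts (the bound $\|\tilde v_0-v_0\|=\mathcal{O}(h^{r+1})$ from Lemma~\ref{lemma:velocity_approximation} and an $\mathcal{O}(h^{r+1})$ local truncation error for the $r$-order Taylor method), so the difference is one of packaging: the paper's choice of intermediate trajectory hides the dependence of the Taylor coefficients on the launch velocity inside the exact flow map, whereas yours exposes it and therefore has slightly more to check, but the power counting you carry out is sound and the conclusion is the same.
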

\begin{proof}
As before, we denote the solution to the Euler--Lagrange boundary-value problem with boundary condition $(q_0,q_1)$ by $(q(t),v(t))$ for $t\in[0,h]$. This solution also satisfies the Euler--Lagrange initial-value problem with initial conditions $(q_0,v_0)$, where $v_0$ satisfies \eqref{q1}. We denote the solution of the Euler--Lagrange initial-value problem with initial conditions $(q_0,\tilde{v}_0)$ by $(\tilde{q}(t),\tilde{v}(t))$. Let $(q_d(t),v_d(t))$ denote the values generated by $r$-order Taylor method with initial conditions $(q_0,\tilde{v}_0)$. Noting that the Euler--Lagrange initial-value problem is well-posed, we denote the Lipschitz constant with respect to initial velocity by $M$.
\begin{align*}
\|(q(t),v(t))-(\tilde{q}(t),\tilde{v}(t))\|&\leq M\|v_0-\tilde{v}_0\| \leq \mathcal{O}(h^{r+1}).
\end{align*}
Combining this inequality with our $r$-order method yields,
\begin{align*}
\|(q(t),v(t))-(q_d(t),v_d(t))\| &= \|(q(t),v(t))-(\tilde{q}(t),\tilde{v}(t))+(\tilde{q}(t),\tilde{v}(t))-(q_d(t),v_d(t))\|\\
&\leq \|(q(t),v(t))-(\tilde{q}(t),\tilde{v}(t))\| + \|(\tilde{q}(t),\tilde{v}(t))-(q_d(t),v_d(t))\|\\
&\leq \mathcal{O}(h^{r+1}),
\end{align*}
where we used the triangle inequality, and the fact that the local truncation error of a $r$-order Taylor method is $\mathcal{O}(h^{r+1})$ to bound the second term in line two, since $(\tilde{q}(t),\tilde{v}(t))$ and $(q_d(t),v_d(t))$ correspond to the exact solution and $r$-th order Taylor approximation, respectively, of the Euler--Lagrange initial-value problem with initial data $(q_0,\tilde{v}_0)$.
\end{proof}

\begin{theorem} Assuming a regular Lagrangian, we consider a fixed approximation scheme used to construct a corresponding discrete Lagrangian, $L_d$, and a discrete right Hamiltonian, $H_d^+$. This results in two integrators, $\tilde{F}_{L_d}:(q_0,p_0) \mapsto (q_{1,L_d},p_{1,L_d})$ and $\tilde{F}_{H_d^+}:(q_0,p_0) \mapsto (q_{1,H_d^+},p_{1,H_d^+})$. If the discrete right Hamiltonian approximation satisfies $p_{1,H_d^+}=D_2L_d(q_0,\hat{q}_1)$, where $\hat{q}_1$ is the approximated value of $q_1$, then the integrators represent the same map, i.e., $(q_{1,L_d},p_{1,L_d}) = (q_{1,H_d^+},p_{1,H_d^+})$.
\end{theorem}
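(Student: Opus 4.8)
The plan is to show that, under the stated hypothesis, the implicit discrete right Hamilton's equations \eqref{IRHE} that define $\tilde F_{H_d^+}$ collapse onto the implicit discrete Euler--Lagrange equations \eqref{IDEL} that define $\tilde F_{L_d}$, so that for a common initial condition $(q_0,p_0)$ the two integrators solve the same pair of equations and therefore return the same $(q_1,p_1)$. Writing the two sets of defining relations side by side, $\tilde F_{L_d}$ is determined by $p_0=-D_1 L_d(q_0,q_{1,L_d})$ together with $p_{1,L_d}=D_2 L_d(q_0,q_{1,L_d})$, while $\tilde F_{H_d^+}$ is determined by $p_0=D_1 H_d^+(q_0,p_{1,H_d^+})$ together with $q_{1,H_d^+}=D_2 H_d^+(q_0,p_{1,H_d^+})$. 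The bridge between the two generating functions is the partial discrete Legendre transform in the $(q_1,p_1)$ pair, and the hypothesis $p_{1,H_d^+}=D_2 L_d(q_0,\hat q_1)$ is exactly the relation that makes the resulting cross terms cancel.

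The key step I would establish is that the discrete right Hamiltonian produced by the scheme can be written as the partial Legendre transform of the discrete Lagrangian evaluated at the approximated endpoint, $H_d^+(q_0,p_1)=p_1^T\hat q_1-L_d(q_0,\hat q_1)$, where $\hat q_1=\hat q_1(q_0,p_1)$ is the scheme's approximation to $q_1$. Differentiating and collecting terms gives
\begin{align*}
D_2 H_d^+(q_0,p_1) &= \hat q_1 + \Big(\tfrac{\partial \hat q_1}{\partial p_1}\Big)^{\!T}\big(p_1-D_2 L_d(q_0,\hat q_1)\big),\\
D_1 H_d^+(q_0,p_1) &= -D_1 L_d(q_0,\hat q_1) + \Big(\tfrac{\partial \hat q_1}{\partial q_0}\Big)^{\!T}\big(p_1-D_2 L_d(q_0,\hat q_1)\big).
\end{align*}
Evaluating at $p_1=p_{1,H_d^+}$ and invoking the hypothesis $p_{1,H_d^+}=D_2 L_d(q_0,\hat q_1)$ annihilates the common factor $\big(p_1-D_2 L_d(q_0,\hat q_1)\big)$ in both lines, leaving the two identities $D_2 H_d^+(q_0,p_{1,H_d^+})=\hat q_1$ and $D_1 H_d^+(q_0,p_{1,H_d^+})=-D_1 L_d(q_0,\hat q_1)$.

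I would then feed these identities back into \eqref{IRHE}. The second equation of \eqref{IRHE} yields $q_{1,H_d^+}=\hat q_1$, and the first yields $p_0=-D_1 L_d(q_0,\hat q_1)$. The latter is precisely the first equation of \eqref{IDEL} with $q_1=\hat q_1$; since $L$ is regular, $L_d$ is nondegenerate for sufficiently small $h$, so the discrete Lagrangian map of \eqref{DEL} is well defined and the root of $p_0=-D_1 L_d(q_0,\cdot)$ is locally unique, forcing $q_{1,L_d}=\hat q_1=q_{1,H_d^+}$. The momenta then agree as well, because $p_{1,L_d}=D_2 L_d(q_0,q_{1,L_d})=D_2 L_d(q_0,\hat q_1)=p_{1,H_d^+}$ by the second equation of \eqref{IDEL} together with the hypothesis, which gives $(q_{1,L_d},p_{1,L_d})=(q_{1,H_d^+},p_{1,H_d^+})$.

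I expect the genuine obstacle to be the key step itself: justifying that the independently constructed $H_d^+$ and $L_d$ are truly linked by $H_d^+(q_0,p_1)=p_1^T\hat q_1-L_d(q_0,\hat q_1)$. Since $H_d^+$ is built by shooting on $T^*Q$ while $L_d$ is built by shooting on $TQ$, the two constructions need not generate identical quadrature data, so this Legendre identity is not automatic; it is here that the phrase ``fixed approximation scheme'' together with the compatibility hypothesis must be used to guarantee that the cross terms vanish and that the two generating functions agree to the order required for the derivative identities above. Verifying this compatibility, rather than the subsequent generating-function bookkeeping, is where the real content of the theorem lies.
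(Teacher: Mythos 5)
Your proposal is correct and follows essentially the same route as the paper's proof: write $H_d^+(q_0,p_1)=p_1^T\hat q_1-L_d(q_0,\hat q_1)$ with $\hat q_1$ implicitly a function of $(q_0,p_1)$ via the hypothesis, differentiate, use the hypothesis to annihilate the cross terms $\bigl(p_{1,H_d^+}-D_2L_d(q_0,\hat q_1)\bigr)$, and then identify $\hat q_1$ with $q_{1,L_d}$ through the first discrete Legendre transform. The paper simply asserts the partial-Legendre-transform identity that you flag as the genuine obstacle (and leaves the local uniqueness of the root of $p_0=-D_1L_d(q_0,\cdot)$ implicit), so your closing observation is a fair critique of the published argument rather than a gap in your own.
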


\begin{proof}
Let $\hat{p}_0$ be defined by $-\hat{p}_0=D_1L_d(q_0,\hat{q}_1)$, where we consider $\hat{q}_1$ as an independent variable. The discrete right Hamiltonian is given by,
\begin{equation*}
H_d^+(q_0,p_{1,H_d^+}) = p_{1,H_d^+}^T \hat{q}_1 - L_d(q_0,\hat{q}_1).
\end{equation*}
Note that here $\hat{q}_1$ is being considered as a function of $q_0$ and $p_{1,H_d^+}$, as defined implicitly by the assumption $p_{1,H_d^+}=D_2L_d(q_0,\hat{q}_1)$. Then
\begin{align*}
p_0 &= D_1H_d^+(q_0,p_{1,H_d^+}) \\ 
&= \frac{\partial \hat{q}_1}{\partial q_0}^T p_{1,H_d^+} - \left( D_1L_d(q_0, \hat{q}_1) + \frac{\partial \hat{q}_1}{\partial q_0}^T D_2L_d(q_0, \hat{q}_1) \right) \\
&= \frac{\partial \hat{q}_1}{\partial q_0}^T \left(p_{1,H_d^+} - D_2L_d(q_0, \hat{q}_1) \right) + \hat{p}_0 \\
&= \hat{p}_0,
\end{align*}
where the last line follows by the assumption $p_{1,H_d^+}=D_2L_d(q_0,\hat{q}_1)$. Therefore, $p_0=\hat{p}_0$, which then implies $-p_0 = D_1L_d(q_0,\hat{q}_1)$ and consequently $\hat{q}_1 = q_{1,L_d}$. Applying the next discrete Legendre transform yields,
\begin{align*}
q_{1,H_d^+} &= D_2H_d^+(q_0, p_{1,H_d^+}) \\
&= \frac{\partial \hat{q}_1}{\partial p_{1,H_d^+} }^T p_{1,H_d^+} + \hat{q}_1 - \left( \frac{\partial \hat{q}_1}{\partial p_{1,H_d^+} }^T D_2L_d(q_0, \hat{q}_1) \right) \\
&= \frac{\partial \hat{q}_1}{\partial p_{1,H_d^+} }^T \left( p_{1,H_d^+} - D_2L_d(q_0, \hat{q}_1) \right) + \hat{q}_1 \\
&= \hat{q}_1.
\end{align*}
Therefore, $\hat{q}_1 = q_{1,H_d^+}$, which implies $q_{1,L_d} = q_{1,H_d^+}$. Now we have,
\begin{align*}
p_{1,H_d^+} &= D_2L_d(q_0,\hat{q}_1) \\
&= D_2L_d(q_0, q_{1,L_d}) \\
&= p_{1,L_d}.
\end{align*}
\end{proof}


\bibliography{vi_review}
\bibliographystyle{plainnat}

\end{document}